\documentclass[11pt]{amsart}

\voffset=5mm
\oddsidemargin=17pt \evensidemargin=17pt
\headheight=9pt     \topmargin=-24pt
\textheight=624pt   \textwidth=433.8pt

\usepackage{amsmath}
\usepackage{amssymb}
\usepackage{epsfig}
\usepackage{amsfonts}
\usepackage{ytableau}
\ytableausetup{boxsize=1.2em}

\newcommand{\excise}[1]{}

\newtheorem{thm}{Theorem}[section]
\newtheorem{lemma}[thm]{Lemma}

\newtheorem{cor}[thm]{Corollary}

\newtheorem{ex}[thm]{Example}

\newtheorem{Warn}[thm]{Caution}

    {\end{Example}}
    {\end{Remark}}




%
                     {\end{minipage}\end{center}\smallskip}

\newenvironment{Abs}{\smallskip\begin{center}\begin{minipage}{13.8cm}}%
                     {\end{minipage}\end{center}\smallskip}

\numberwithin{equation}{section}


\def\wh{\widehat}

\def\sq{\square}

\def\la{\lambda}
\def\ga{\gamma}

\def\de{\delta}

\def\al{\alpha}
\def\be{\beta}

\def\cd{\mathcal D}

\def\cP{\mathcal P}

\def\ssu{\subset}

\def\<{\langle}
\def\>{\rangle}

\def\vt{\vartheta}

\def\0{{\mathbf 0}}

\def\.{\hskip.06cm}
\def\ts{\hskip.03cm}

\def\SP{{\textsc{\#P}}}

\def\pq{\delta}

\def\nin{\noindent}

\def\cD{\cd}

\def\wht{\wh}

\def\parti{\text{\small \textmd{P}}}

\def\dwn{{\hskip-.03cm \downarrow}\ts}
\def\upa{{\hskip-.03cm \uparrow}\ts}

\def\da{\vt}

\def\bos{{\langle s\rangle}}
\def\boss{{\langle 1\rangle}}

\begin{document}
\title{Bounds on  Kronecker and $q$-binomial coefficients}

\author[Igor~Pak]{ \ Igor~Pak$^\star$}

\author[Greta~Panova]{ \ Greta~Panova$^{\dagger}$}

\date{\today}

\thanks{\thinspace ${\hspace{-.54cm}}^\star$Department of Mathematics,
UCLA, Los Angeles, CA 90095; \.
\texttt{(pak@)math.ucla.edu}. \\ \thinspace ${\hspace{-.45ex}}^{\dagger}$Department
of Mathematics, University of Pennsylvania, Philadelphia, PA 19103; \. \texttt{panova@math.upenn.edu}.
}

\maketitle

\begin{Abs}{\footnotesize {\sc Abstract.} \ts
We present a lower bound on the Kronecker coefficients for tensor squares of the symmetric group via the
characters of~$S_n$, which we apply to obtain various explicit estimates.  Notably,
we extend Sylvester's unimodality of $q$-binomial coefficients $\binom{n}{k}_q$ as
polynomials in~$q$ to derive sharp bounds on the differences of their consecutive
coefficients. We then derive effective asymptotic lower bounds for a wider class of Kronecker coefficients. }
\end{Abs}

\bigskip

\section{Introduction}

\nin
The \emph{Kronecker coefficients} are perhaps the most challenging, deep
and mysterious objects in Algebraic Combinatorics.  Universally admired,
they are beautiful, unapproachable and barely understood.
For decades since they were introduced by Murnaghan in~1938, the field
lacked tools to study them, so they remained largely out of reach.
However, in recent years a flurry of activity led to significant advances,
spurred in part by the increased interest and applications to other fields.

In this paper, we focus on lower bounds for the Kronecker coefficients.
We are motivated by applications to the \ts \emph{$q$-binomial $($Gaussian$\ts)$
coefficients}, and by connections to the \emph{Geometric Complexity
Theory} (see~$\S$\ref{ss:fin-GCT}).  The tools are
based on technical advances in combinatorial representation theory
obtained in recent years, see~\cite{BOR2,CDW,CHM,Man,Val2}, and our own
series of papers~\cite{PP_s,PP,PP_c,PPV}.  In fact, here we give several
extensions of our earlier work.

\smallskip

The \emph{Kronecker coefficients} \ts $g(\la,\mu,\nu)$ \ts are defined by:
\begin{equation}\label{eq:kron-def}
\chi^\la \ts \otimes \ts \chi^\mu \, = \, \sum_{\nu \vdash n}
\, g(\la,\mu,\nu)\. \chi^\nu\., \quad \text{where} \ \ \la,\mu \vdash n\ts,
\end{equation}
where $\chi^\al$ denotes the character of the irreducible representation
$\mathbb{S}^\al$
of $S_n$ indexed by partition~$\al\vdash n$.  They are integer and nonnegative
by definition, have full~$S_3$ symmetry, and satisfy a number of further
properties (see~$\S$\ref{ss:basic-kron}). In contrast with their ``cousins'',
\emph{Littlewood--Richardson $($LR$)$ coefficients},
they lack a combinatorial interpretation or any meaningful positive formula,
and thus are harder to compute and to estimate.

\smallskip


\smallskip

Our first result is a lower bound of the Kronecker coefficients $g(\la,\mu,\mu)$
for multiplicities in tensor squares of self-conjugate partitions:

\begin{thm}\label{t:char_effective}
Let $\mu=\mu'$ be a self-conjugate partition and let $\wh\mu=(2\mu_1-1,2\mu_2-3,\ldots)\vdash n$
be the partition of its principal hooks. Then:
$$
g(\la, \mu,\mu) \, \geq \, \bigl|\ts\chi^\la[\wh\mu] \ts \bigr|\,, \quad \text{for every}  \quad \la \vdash n\ts,
$$
where $\chi^\la[\wh \mu]$ denotes the value of the character $\chi^\la$ at a permutation of cycle type $\wh\mu$.
\end{thm}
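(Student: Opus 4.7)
I would start from the standard character identity
\[
  g(\lambda,\mu,\mu) \;=\; \langle\chi^\lambda,\chi^\mu\otimes\chi^\mu\rangle_{S_n}
  \;=\; \frac{1}{n!}\sum_{g\in S_n}\chi^\lambda(g)\,\chi^\mu(g)^2,
\]
and split the tensor square into its symmetric and antisymmetric parts:
\[
  \chi^\mu\otimes\chi^\mu \;=\; S^2\chi^\mu + \Lambda^2\chi^\mu,
  \qquad
  (S^2\chi^\mu - \Lambda^2\chi^\mu)(g) \;=\; \chi^\mu(g^2) \;=:\; \psi^\mu(g).
\]
Since the two multiplicities $\langle\chi^\lambda,S^2\chi^\mu\rangle$ and $\langle\chi^\lambda,\Lambda^2\chi^\mu\rangle$ are nonnegative integers summing to $g(\lambda,\mu,\mu)$, subtracting them gives the general lower bound
\[
  g(\lambda,\mu,\mu) \;\geq\; \bigl|\langle\chi^\lambda,\psi^\mu\rangle\bigr|,
\]
which is valid for any $\mu$. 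The rest of the plan is to convert this into a bound by $|\chi^\lambda[\widehat\mu]|$ under the self-conjugate hypothesis.

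For the conversion, I would exploit three structural facts about self-conjugate $\mu$. First, iterated Murnaghan--Nakayama yields $\chi^\mu[\widehat\mu]=\pm 1$: the longest part $2\mu_1-1$ of $\widehat\mu$ is the length of the unique border strip in $\mu$ of that size (namely the outer principal hook), and the remaining partition is again self-conjugate with principal hooks $(\widehat\mu_2,\widehat\mu_3,\dots)$, so the induction closes. Second, the parts of $\widehat\mu$ are pairwise distinct odd integers, so the $S_n$-class $[\widehat\mu]$ is stable under squaring: if $\sigma\in[\widehat\mu]$ then $\sigma^2\in[\widehat\mu]$. Third, the identity $\chi^{\mu'}=\mathrm{sgn}\cdot\chi^\mu$ together with $\mu=\mu'$ forces $\chi^\mu$ to vanish on every odd conjugacy class.

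The key technical step is the identity
\[
  \bigl|\langle\chi^\lambda,\psi^\mu\rangle\bigr| \;\geq\; \bigl|\chi^\lambda[\widehat\mu]\bigr|.
\]
Equivalently, writing $\phi_{\widehat\mu}:=z_{\widehat\mu}\cdot\mathbf{1}_{[\widehat\mu]}=\sum_\nu\chi^\nu[\widehat\mu]\,\chi^\nu$, the theorem reduces to showing that the two virtual characters
\[
  \chi^\mu\otimes\chi^\mu \,\pm\, \chi^\mu[\widehat\mu]\cdot\phi_{\widehat\mu}
\]
are honest (nonnegative) characters of $S_n$. Expanding $\psi^\mu$ as
$\sum_\rho \chi^\mu[\rho^\square]\,z_\rho^{-1}\,n!\,\mathbf{1}_{[\rho]}$, the contribution of $\rho=\widehat\mu$ to $\langle\chi^\lambda,\psi^\mu\rangle$ is $\chi^\mu[\widehat\mu]\chi^\lambda[\widehat\mu]/z_{\widehat\mu}$, which by itself is far smaller than the target; the remaining conjugacy classes, constrained by the vanishing of $\chi^\mu$ on odd elements and the Murnaghan--Nakayama sign pattern on the classes that feed into $[\widehat\mu]$ under squaring, must coherently reinforce rather than cancel this dominant term.

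The main obstacle will be exactly this coherence: arguing that the non-$[\widehat\mu]$ contributions to $\psi^\mu$ never cancel the $\chi^\lambda[\widehat\mu]$-term. I expect this to be handled either by an explicit representation-theoretic construction (producing an $S_n$-equivariant embedding that realizes the claimed positivity inside $\mathbb{S}^\mu\otimes\mathbb{S}^\mu$), or by the combinatorial/character machinery developed in the authors' earlier series~\cite{PP_s,PP,PP_c,PPV}, which already addressed the analogous statement for the staircase and related self-conjugate shapes.
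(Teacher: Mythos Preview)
Your setup through the symmetric/antisymmetric square is correct and yields the valid inequality
\[
  g(\lambda,\mu,\mu)\;\ge\;\bigl|\langle\chi^\lambda,S^2\chi^\mu\rangle-\langle\chi^\lambda,\Lambda^2\chi^\mu\rangle\bigr|\;=\;\Bigl|\tfrac{1}{n!}\sum_{g}\chi^\lambda(g)\,\chi^\mu(g^2)\Bigr|,
\]
but the proof then stops at exactly the point that carries all the content. You explicitly call the step $|\langle\chi^\lambda,\psi^\mu\rangle|\ge|\chi^\lambda[\widehat\mu]|$ ``the main obstacle'' and defer it to an unspecified ``explicit representation-theoretic construction'' or to machinery from earlier papers. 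Neither is supplied. The heuristic that non-$[\widehat\mu]$ classes ``must coherently reinforce rather than cancel'' is not an argument: the class function $g\mapsto\chi^\mu(g^2)$ is supported on many classes (for $\mu=(2,2)$ it is nonzero on every class of $S_4$), and nothing you have written controls those contributions. As it stands this is a plan with a missing proof, not a proof.

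The paper closes this gap by a different route that bypasses $S^2/\Lambda^2$ entirely. It restricts to the alternating group $A_n$, where for self-conjugate $\mu$ the character $\chi^\mu{\downarrow}$ splits as $\psi^\mu_+\oplus\psi^\mu_-$. Frobenius reciprocity gives
\[
  g(\lambda,\mu,\mu)\;=\;\langle\psi^\mu_+\otimes\chi^\lambda{\downarrow},\,\psi^\mu_\pm\rangle_{A_n}\;+\;\langle\psi^\mu_-\otimes\chi^\lambda{\downarrow},\,\psi^\mu_\pm\rangle_{A_n},
\]
so in particular $g(\lambda,\mu,\mu)\ge\max\{m_{\mu+},m_{\mu-}\}$, where $m_{\mu\pm}$ are the multiplicities of $\psi^\mu_\pm$ in $\psi^\mu_+\otimes\chi^\lambda{\downarrow}$. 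The crucial point is that $\psi^\mu_+$ and $\psi^\mu_-$ agree on every $A_n$-class except the two halves $C^{\widehat\mu}_\pm$ of the split class; evaluating the decomposition of $\psi^\mu_+\otimes\chi^\lambda{\downarrow}$ on $C^{\widehat\mu}_+$ and $C^{\widehat\mu}_-$ and subtracting kills every term except $m_{\mu+}-m_{\mu-}$, and one reads off $m_{\mu+}-m_{\mu-}=\chi^\lambda[\widehat\mu]$ directly. No estimate over the remaining classes is needed --- they cancel identically. This is precisely the mechanism your $S^2/\Lambda^2$ approach lacks: the $A_n$ splitting singles out $\widehat\mu$ \emph{structurally} via the branching rules, whereas the Frobenius--Schur class function $\chi^\mu(g^2)$ does not.
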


While it is relatively easy to obtain various upper bounds on the Kronecker
coefficients (see e.g.~\eqref{eq:upper-cat}), this is the only general lower bound that we know.
The theorem strengthens a qualitative result \ts $g(\la, \mu,\mu) \ge 1$ \ts given
in~\cite[Lemma~1.3]{PPV}, used there to prove a special case of the Saxl conjecture
(see $\S$\ref{ss:fin-char}).  We use the bound to give a new proof of
Stanley's Theorem~\ref{t:stanley-unimod}, from~\cite{Sta-unim}.
%

\smallskip

Our second result is motivated by an application of bounds for Kronecker
coefficients to the \emph{$q$-binomial coefficients}, defined as:
$$
\binom{m+\ell}{m}_q
\, = \ \. \frac{(q^{m+1}-1)\. \cdots\. (q^{m+\ell}-1)}{(q-1)\.\cdots\. (q^{\ell}-1)}
\ \. = \, \, \sum_{n=0}^{\ell\ts m} \, \. p_n(\ell,m) \. q^n\ts,
$$
where $p_n(\ell,m)$ is also the number of partitions  of $n$ which fit inside an $\ell \times m$ rectangle.
In~1878, \emph{Sylvester} proved \emph{unimodality} of the coefficients:
$$
p_0(\ell,m)\. \le \. p_1(\ell,m)\. \le \. \ldots \. \le \. p_{\lfloor\ell\ts m/2\rfloor}(\ell,m) \. \ge \. \ldots \. \ge \. p_{\ell\ts
m}(\ell,m)\ts,
$$
see~\cite{Syl}.  In~\cite{PP_s}, we used the Kronecker coefficients to prove \emph{strict unimodality}:
\begin{equation}\label{eq:strict-unim}
p_k(\ell,m) \ts - \ts p_{k-1}(\ell,m) \. \ge 1\., \
\quad \text{for} \ \quad 2\le k \le \ell\ts m/2\ts, \ \. \ell, \ts m \ge 8\ts.
\end{equation}
This result was subsequently improved by Zanello~\cite{Zan} and Dhand~\cite{Dha}
(see~$\S$\ref{ss:fin-qbin}).
Ignoring constraints of $\ell$ and~$m$, they prove that the l.h.s. of~\eqref{eq:strict-unim}
is $\Omega(\sqrt{k})$ and $\Omega(k)$, respectively.  Here we substantially strengthen
these bounds as follows and give an effective bound on certain Kronecker coefficients.

\begin{thm}\label{t:qbin-main}
There is a universal constant $A>0$, such that for all $m\ge \ell\ge 8$ and
$2\le k\le \ell \ts m/2$, we have:
$$
g((\ell m -k, k), m^\ell, m^\ell) = p_k(\ell,m) \ts - \ts p_{k-1}(\ell,m) \, \ts > \, A \, \frac{2^{\sqrt{s}}}{s^{9/4}}\,, \quad \ \text{where} \quad \.
s=\min\{2k,\ell^2\}\ts.
$$
\end{thm}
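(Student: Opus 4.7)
The plan is to reduce Theorem~\ref{t:qbin-main} to a purely combinatorial estimate on $q$-binomial coefficients, and then deliver that estimate via an explicit injection combined with subset-sum asymptotics. First I would invoke the identity
$$
g\bigl((\ell m - k,\ts k),\ts m^\ell,\ts m^\ell\bigr) \ = \ p_k(\ell,m) \,-\, p_{k-1}(\ell,m),
$$
proved in our earlier paper~\cite{PP_s}, which reduces the theorem to producing a universal lower bound of the form $A \cdot 2^{\sqrt{s}}/s^{9/4}$ on consecutive differences of coefficients of $\binom{\ell+m}{\ell}_q$.

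Following the general strategy of~\cite{PP_s,Zan,Dha}, I would then fix an injection $\phi \colon \mathcal{P}_{k-1}(\ell,m) \hookrightarrow \mathcal{P}_k(\ell,m)$ that adds one box to a partition's Young diagram in a canonical location, and lower bound $p_k(\ell,m) - p_{k-1}(\ell,m)$ by the size of an explicit subset $\mathcal{T} \subseteq \mathcal{P}_k(\ell,m)\setminus \mathrm{Im}\,\phi$. The essential improvement over the polynomial lower bounds in~\cite{Zan,Dha} should come from embedding strict (distinct-part) partitions into $\mathcal{T}$: after padding with full rows of width~$m$ and full columns of height~$\ell$, strict partitions with parts in $\{1,\ldots,t\}$ for $t=\lfloor\sqrt{s}\rfloor$ land outside $\mathrm{Im}\,\phi$, and their number --- the coefficient of $q^{k'}$ in $\prod_{j=1}^{t}(1+q^j)$ for a suitable shifted target~$k'$ --- admits, by a standard local central limit estimate for subset sums, a lower bound of order $2^t/t^{9/4}$. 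Unwinding $t = \lfloor\sqrt{s}\rfloor$ with $s = \min\{2k,\ell^2\}$ then delivers the claimed $A \cdot 2^{\sqrt{s}}/s^{9/4}$; the two clauses of the minimum arise because the distinct parts must simultaneously satisfy $t(t+1)/2 \gtrsim k$ and the column bound $t \le \ell$.

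The main obstacle is the combinatorial design of the padding together with the injection $\phi$, so that the embedded strict partitions remain pairwise distinct in $\mathcal{P}_k(\ell,m)$, all fit inside the rectangle, and all lie outside $\mathrm{Im}\,\phi$. This is most delicate in the narrow regime $s = \ell^2$, where part sizes are capped by~$\ell$ and there is little slack in the short direction; the hypotheses $m \ge \ell \ge 8$ and $k \le \ell m/2$ are precisely what provide the room in the long direction needed for the argument to go through uniformly in the full parameter range, and for the constant $A$ to be universal.
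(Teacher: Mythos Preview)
Your proposal takes a fundamentally different route from the paper, and the route has a real gap.

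After the first reduction via the Two Coefficients Lemma (which is the same as the paper's), the paper does \emph{not} work combinatorially inside $\mathcal{P}_k(\ell,m)$ at all. Instead it passes back through Kronecker coefficients: the monotonicity property (Theorem~\ref{t:manivel}) is used to reduce the $\ell\times m$ rectangle to an $n\times n$ square with $n\approx\ell$; then Theorem~\ref{t:char_effective} gives $g(n^n,n^n,\tau_k)\ge|\chi^{\tau_k}[\wh\mu]|=b_k(n)-b_{k-1}(n)$, where $b_k(n)$ counts partitions of $k$ into distinct \emph{odd} parts $\le 2n-1$; and finally an effective refinement of Almkvist's analytic estimates (Theorem~\ref{t:almkvist_eff}) bounds this difference from below by $C\cdot 2^{\sqrt{2k}}/(2k)^{9/4}$. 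The character step is what turns a difference of partition counts into a \emph{single} count with an exponential lower bound, and the monotonicity step is what handles the narrow regime $s=\ell^2$.

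Your plan hinges on producing an explicit injection $\phi\colon\mathcal{P}_{k-1}(\ell,m)\hookrightarrow\mathcal{P}_k(\ell,m)$ whose complement you can describe well enough to embed $\Theta(2^{\sqrt{s}})$ padded strict partitions into it. No such injection is known. The injections that \emph{are} known --- O'Hara's chain decomposition and the KOH identity used in~\cite{Zan,Dha} --- do not ``add one box in a canonical location''; they are structurally much more complicated, and the best bounds anyone has extracted from them are $\Omega(\sqrt{k})$ and $\Omega(k)$, not exponential. You correctly flag the design of $\phi$ together with the padding as ``the main obstacle,'' but you offer no construction, and there is no off-the-shelf result to invoke here. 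Absent that, the proposal is a wish rather than a proof; in particular, the leap from polynomial bounds in~\cite{Zan,Dha} to your claimed $2^{\sqrt{s}}$ is exactly the content of the theorem and is not delivered by anything in your outline.
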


The proof of the theorem gives an effective bound with $A= 0.004$.
The lower bound gives the correct exponential behavior of the difference,
but perhaps not the base of the exponent.
We also discuss an upper bound in~$\S$\ref{ss:qbin-upper}
(see also~$\S$\ref{ss:fin-qbin}).

\smallskip

The proof of the theorem has several ingredients.
We use the above mentioned Stanley's theorem, an extension of analytic
estimates in the proof of \emph{Almkvist's Theorem}
(Theorem~\ref{t:almkvist-unimod}), and the \emph{monotonicity property}
of the Kronecker coefficients (Theorem~\ref{t:manivel}).
Most crucially, we use the following connection between
the Kronecker and $q$-binomial coefficients:

\begin{lemma}[Two Coefficients Lemma]
\label{l:g_partitions}
Let $n=\ell \ts m$, \ts $\tau_k=(n-k,k)$, where $1\leq k\leq n/2$.
Then:
$$g\bigl(m^\ell,m^\ell,\tau_k\bigr) \, = \, p_k(\ell,m) \. - \. p_{k-1}(\ell,m)\ts.
$$
\end{lemma}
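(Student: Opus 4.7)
The plan is to reduce the Kronecker coefficient to a sum of squared Littlewood--Richardson coefficients with rectangular outer shape via Frobenius reciprocity, and then exploit the multiplicity-freeness of such LR coefficients to collapse the sum into a count of sub-rectangle partitions, i.e., $p_j(\ell,m)$.

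I would first apply Young's rule to write the two-row character $\chi^{\tau_k}$ as a telescoping difference of permutation characters on $j$-subsets. Setting $\pi_j := \mathrm{Ind}_{S_{n-j}\times S_j}^{S_n}\mathbf{1}$, Young's rule gives $\pi_j = \sum_{i=0}^{j} \chi^{(n-i,i)}$, so $\chi^{\tau_k} = \pi_k - \pi_{k-1}$, and therefore
$$g(m^\ell,m^\ell,\tau_k) \, = \, \bigl\langle \chi^{m^\ell}\otimes\chi^{m^\ell},\,\pi_k\bigr\rangle \, - \, \bigl\langle \chi^{m^\ell}\otimes\chi^{m^\ell},\,\pi_{k-1}\bigr\rangle.$$
It thus suffices to show $\bigl\langle\chi^{m^\ell}\otimes\chi^{m^\ell},\pi_j\bigr\rangle = p_j(\ell,m)$ for $j=k$ and $j=k-1$.

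Next I would apply Frobenius reciprocity to rewrite each pairing as $\bigl\langle \chi^{m^\ell}{\downarrow}\otimes \chi^{m^\ell}{\downarrow},\,\mathbf{1}\bigr\rangle$, with restriction to $S_{n-j}\times S_j$, and then expand via the branching rule
$$\chi^{m^\ell}{\downarrow}_{\,S_{n-j}\times S_j} \, = \, \sum_{\alpha\vdash n-j,\,\beta\vdash j} c^{m^\ell}_{\alpha,\beta}\; \chi^\alpha \boxtimes \chi^\beta.$$
Using the standard identity $\langle \chi^\alpha\otimes\chi^{\alpha'},\mathbf{1}\rangle = \delta_{\alpha\alpha'}$ in each of the two tensor factors, the double restriction collapses to
$$\bigl\langle\chi^{m^\ell}\otimes\chi^{m^\ell},\pi_j\bigr\rangle \, = \, \sum_{\alpha\vdash n-j,\,\beta\vdash j} \bigl(c^{m^\ell}_{\alpha,\beta}\bigr)^{\!2}.$$

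The crucial input is the classical rectangular Littlewood--Richardson identity: for $|\alpha|+|\beta|=m\ell$ and $\alpha\subseteq m^\ell$, one has $c^{m^\ell}_{\alpha,\beta}=1$ precisely when $\beta_i = m - \alpha_{\ell+1-i}$ for every $i$ (so $\beta$ is the $180^\circ$-rotated complement of $\alpha$ inside the rectangle), and $c^{m^\ell}_{\alpha,\beta}=0$ otherwise. Consequently the sum of squares counts partitions $\alpha\subseteq m^\ell$ with $|\alpha|=n-j$, which equals $p_{n-j}(\ell,m) = p_j(\ell,m)$ by the well-known $p_i(\ell,m) = p_{\ell m-i}(\ell,m)$ symmetry of Gaussian coefficients. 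Substituting $j=k$ and $j=k-1$ and subtracting yields the lemma. The main technical ingredient is the rectangular LR identity, but it is classical (the skew shape $m^\ell/\alpha$ admits a unique LR filling, obtained by jeu de taquin rectification), so I expect the argument to be execution of standard machinery rather than encountering a genuine obstacle.
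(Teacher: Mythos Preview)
Your argument is correct. The paper does not actually prove this lemma; it only cites the references \cite{CM}, \cite{Val2}, \cite{PP_s}, \cite{MY}, \cite{PP} and states that the result is ``simple but very useful.'' Your route via Young's rule $\chi^{\tau_k}=\pi_k-\pi_{k-1}$, Frobenius reciprocity, the branching expansion $\chi^{m^\ell}{\downarrow}=\sum c^{m^\ell}_{\alpha,\beta}\,\chi^\alpha\boxtimes\chi^\beta$, and the multiplicity-free rectangular LR identity $c^{m^\ell}_{\alpha,\beta}=\delta_{\beta,\,\widehat\alpha}$ (complement in the rectangle) is exactly the standard argument that appears in those sources, in particular~\cite{PP,PP_s}; there the same computation is sometimes phrased dually via the Cauchy-type expansion $s_{m^\ell}(x,y)=\sum_{\alpha\subseteq m^\ell} s_\alpha(x)\,s_{\widehat\alpha}(y)$, which encodes the same rectangular LR fact. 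So there is nothing to compare: your proof is the proof, and no obstacle remains.
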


This simple but very useful lemma was first proved in a special case
in~\cite{CM}, and in full generality in~\cite[$\S 7$]{Val2}
and later in~\cite{PP_s}, but is implicit in~\cite{MY,PP}.  Note that it
immediately implies Sylvester's unimodality theorem.

\smallskip

Finally, using this result and the semigroup property for
Kronecker coefficients we can give an explicit lower bound
for a wider classes of partition triples.  Here we state
an easy corollary of a general (but technical to state)
Theorem~\ref{thm:gen} (see below).

\begin{cor}\label{cor:general}
For any partition $\la \vdash n$, let $d(\la) =m$ be the
size of its Durfee square. Then, for any $k \leq m^2/2$, we have:
$$
g(\la, \la, (n-k,k) ) > C  \frac{2^{\sqrt{2k} } }{(2k)^{9/4} }\,,
\quad \text{where} \quad C\ts = \. \frac{\sqrt{27/8}}{\pi^{3/2}}\..
$$
\end{cor}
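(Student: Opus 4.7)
The plan is to reduce Corollary~\ref{cor:general} to Theorem~\ref{t:qbin-main} by exploiting the Durfee square of $\la$. Since $d(\la)=m$, the Young diagram of $\la$ contains the square partition $m^m$, so we can write $\la = m^m + \rho$ (componentwise) where $\rho$ has $|\rho|=n-m^2$ and encodes the arm and leg cells outside the Durfee square. We set $\ell = m$ throughout, so the constraint $k\leq m^2/2 = \ell m/2$ needed by Theorem~\ref{t:qbin-main} is automatic.

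Next, I would apply the monotonicity/semigroup property of Kronecker coefficients (Theorem~\ref{t:manivel}) in the following flexible form: adding $\rho$ to the first two arguments and a single row of length $|\rho|$ to the first part of the third argument only preserves or increases the coefficient. The essential input is the trivial identity $g\bigl(\rho,\rho,(|\rho|)\bigr) = \langle \chi^\rho, \chi^\rho\rangle = 1$, which provides the semigroup ``building block'' to be combined with $g(m^m, m^m, (m^2-k,k))$. The outcome is
$$
g\bigl(\la,\la,(n-k,k)\bigr) \, \geq \, g\bigl(m^m,m^m,(m^2-k,k)\bigr)\ts,
$$
since $(m^2-k,k) + (|\rho|,0) = (n-k,k)$.

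Then I would invoke the Two Coefficients Lemma (Lemma~\ref{l:g_partitions}) with $\ell = m$, which identifies the right-hand side with $p_k(m,m) - p_{k-1}(m,m)$. Since $k \leq m^2/2$, the quantity $\min\{2k,m^2\}$ of Theorem~\ref{t:qbin-main} equals $2k$, yielding a lower bound of the form $(\text{const})\cdot 2^{\sqrt{2k}}/(2k)^{9/4}$. The sharper constant $C = \sqrt{27/8}/\pi^{3/2}$ arises by tracking constants in the Almkvist-type analytic estimate used in the proof of Theorem~\ref{t:qbin-main}, specialized to the square-diagonal regime $\ell = m$, rather than invoking the cruder universal constant $A = 0.004$.

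\textbf{Main obstacle.} The crux is the monotonicity step: the classical Manivel statement adds the same partition to all three arguments, whereas here we need the more flexible version that adds $\rho$ to two arguments and only a single row $(|\rho|)$ to the third. This follows from the full monotonicity/semigroup machinery used throughout the paper (cf.\ the discussion preceding Theorem~\ref{t:manivel}), but requires care. A minor secondary issue is handling small $m$ where Theorem~\ref{t:qbin-main}'s hypothesis $m\geq 8$ fails; since then $k \leq m^2/2 \leq 24$, the finitely many remaining cases can be disposed of by direct inspection or by absorbing them into the constant.
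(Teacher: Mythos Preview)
There is a genuine gap in your monotonicity step. You claim $\la = m^m + \rho$ componentwise for some partition $\rho$, but this decomposition does not exist in general. If $\la$ has a nonempty leg below its Durfee square (i.e.\ $\ell(\la)>m$), then $\rho_i = \la_i - m$ for $i\le m$ while $\rho_i = \la_i$ for $i>m$, and there is no reason for $\rho_m \ge \rho_{m+1}$. For instance $\la=(2,2,2)$ has Durfee size $m=2$ and $\la - (2,2) = (0,0,2)$; more dramatically, $\la=m^{2m}$ gives $\rho=(0^m,m^m)$. Since Theorem~\ref{t:manivel} requires all six inputs to be partitions, your single application of monotonicity with the triple $\bigl(\rho,\rho,(|\rho|)\bigr)$ is illegitimate.

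The paper fixes exactly this point by separating arm and leg: write $\la=(m^m+\al,\be)$, use both $g(\al,\al,(|\al|))=1$ and $g(\be,\be,(|\be|))=1$, and invoke Theorem~\ref{thm:gen}. That theorem applies Theorem~\ref{t:manivel} \emph{twice}, interleaved with the conjugation symmetry $g(\la,\mu,\nu)=g(\la,\mu',\nu')$; the conjugation converts the vertical stacking of $\be$ below $m^m+\al$ into a horizontal componentwise sum, which is what makes monotonicity applicable. Your proposal needs this two-step mechanism. Incidentally, your stated ``main obstacle'' is a non-issue: Theorem~\ref{t:manivel} as written already allows adding distinct partitions $\al,\be,\ga$ to the three slots, not a common one. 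And the sharp constant $C=\sqrt{27/8}/\pi^{3/2}$ is available directly from Corollary~\ref{c:square_bin} (the square case) without detouring through Theorem~\ref{t:qbin-main}.
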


Here the \emph{Durfee square} is the largest square which fits into
Young diagram of the partition. In other words, the ``thicker''
 the partition is, the better lower bound we obtain.

\smallskip

The rest of the paper is structured as follows.  We begin with a quick recap
of definitions, notations and some basic results we are using (Section~\ref{s:basic}).
We prove Theorem~\ref{t:char_effective} in Section~\ref{s:effective-proof} and
give applications of the theorem in Section~\ref{s:char}. We then prove
Theorems~\ref{t:qbin-main}   in Section~\ref{s:qbin} and the more general effective bound on the Kronecker coefficients in Section~\ref{sec:gen} as Theorem~\ref{thm:gen}.  We conclude with final
remarks and open problems (Section~\ref{s:fin}).

\bigskip

\section{Definitions and basic results}\label{s:basic}

\subsection{Partitions and Young diagrams}\label{ss:basic-part}
We adopt the standard notation in combinatorics of partitions
and representation theory of~$S_n$, as well as the theory of symmetric functions
(see e.g.~\cite{Mac,Sta}).

Let $\cP$ denote the set of integer partitions $\la = (\la_1,\la_2,\ldots)$.  We write
$|\la|=n$ and $\la\vdash n$, for $\la_1+\la_2+\ldots = n$. Let $\cP_n$ the set of
all $\la\vdash n$, and let $\parti(n)=|\cP_n|$ the number of partitions of~$n$.
We use $\ell(\la)$ to denote the number of parts of~$\la$, and $\la'$ to denote
the conjugate partition.
Define addition of partitions $\al,\be \in \cP$ to be their addition as vectors:
$$\alpha+\beta \. = \. (\alpha_1+\beta_1, \ts \alpha_2+\beta_2,\ts \ldots)\ts.$$

We denote by $\chi^\la$ the character of the irreducible representation $\mathbb{S}^\la$
of $S_n$ corresponding to~$\la$. Denote by  $f^{\la}=\chi^\la[1^n]$ the dimension of
$\mathbb{S}^\la$.  Finally, \emph{hooks} of a partition  $\mu$ are defined by
$h_{ij}=\mu_i+\mu_j'-i-j+1$, and the integers $h_{11},h_{22},\ldots$
are called \emph{principal hooks}. When $\mu=\mu'$, the sequence of principal hooks
is exactly the partition $\wh\mu$ defined in Theorem~\ref{t:char_effective}.

\subsection{Kronecker coefficients}\label{ss:basic-kron}
It is well known that
$$
g(\la,\mu,\nu) \, = \, \frac{1}{n!} \. \sum_{\omega \in S_n} \. \chi^\la(\omega)\ts \chi^\mu(\omega)\ts \chi^\nu(\omega)\ts.
$$
This implies that Kronecker coefficients have full $S_3$ group of symmetry:
$$
g(\la,\mu,\nu) \, = \, g(\mu,\la,\nu) \, = \, g(\la,\nu,\mu) \, = \, \ldots 
$$

We will use the following \emph{monotonicity property}:

\begin{thm}[\cite{Man}]  \label{t:manivel}
Suppose $\al,\be,\ga$ are partitions of~$n$, such that the Kronecker coefficients
$ g(\al,\be,\ga)> 0$. Then for any partitions $\la,\mu,\nu$ with $|\la|=|\mu|=|\nu|$ we have
$$g(\la+\al,\mu+\be,\nu+\ga)\. \geq \. g(\la,\mu,\nu)\ts .$$
\end{thm}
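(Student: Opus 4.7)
The plan is to prove the inequality by constructing, from the hypothesis $g(\al,\be,\ga)\geq 1$, an explicit linear injection between multiplicity spaces. The natural setting is the Schur-Weyl reformulation: for vector spaces $V,W$ of sufficiently large dimension,
$$
g(\la,\mu,\nu) \, = \, \dim\,\text{Hom}_{GL(V)\times GL(W)}\bigl(\mathbb{S}^\la(V)\otimes \mathbb{S}^\mu(W),\, \mathbb{S}^\nu(V\otimes W)\bigr),
$$
where $GL(V)\times GL(W)$ acts diagonally on $V\otimes W$ via its natural embedding into $GL(V\otimes W)$. The hypothesis supplies a fixed nonzero equivariant map $\phi_0:\mathbb{S}^\al(V)\otimes \mathbb{S}^\be(W)\to\mathbb{S}^\ga(V\otimes W)$.

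The key algebraic input is the \emph{Cartan component}: for any partitions $\pi,\rho$ and any vector space $U$, the tensor product $\mathbb{S}^\pi(U)\otimes \mathbb{S}^\rho(U)$ contains $\mathbb{S}^{\pi+\rho}(U)$ as a direct summand of multiplicity one, generated by the product $v_\pi\otimes v_\rho$ of highest weight vectors. Equivalently, the Littlewood-Richardson coefficient $c^{\pi+\rho}_{\pi,\rho}=1$, witnessed by the unique lattice SSYT of shape $(\pi+\rho)/\pi$ placing $\rho_i$ copies of $i$ in row~$i$. Applied to $U=V$, $U=W$, and $U=V\otimes W$, this yields canonical equivariant inclusions $\iota_V:\mathbb{S}^{\la+\al}(V)\hookrightarrow \mathbb{S}^\la(V)\otimes \mathbb{S}^\al(V)$, similarly $\iota_W$, and an equivariant projection $\pi_{\nu+\ga}:\mathbb{S}^\nu(V\otimes W)\otimes \mathbb{S}^\ga(V\otimes W)\twoheadrightarrow \mathbb{S}^{\nu+\ga}(V\otimes W)$.

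With these in hand, for each $\psi$ in the multiplicity space computing $g(\la,\mu,\nu)$, define
$$
\Psi(\psi) \, := \, \pi_{\nu+\ga}\,\circ\,(\psi\otimes\phi_0)\,\circ\,(\iota_V\otimes\iota_W)\,:\ \mathbb{S}^{\la+\al}(V)\otimes \mathbb{S}^{\mu+\be}(W)\, \to\, \mathbb{S}^{\nu+\ga}(V\otimes W),
$$
after the natural reordering of tensor factors from $(\mathbb{S}^\la\otimes \mathbb{S}^\al)\otimes(\mathbb{S}^\mu\otimes \mathbb{S}^\be)$ to $(\mathbb{S}^\la\otimes \mathbb{S}^\mu)\otimes (\mathbb{S}^\al\otimes \mathbb{S}^\be)$. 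Each factor of the composition is $(GL(V)\times GL(W))$-equivariant, so $\Psi(\psi)$ lies in the multiplicity space computing $g(\la+\al,\mu+\be,\nu+\ga)$. Injectivity of the linear assignment $\psi\mapsto\Psi(\psi)$ would then yield the desired inequality.

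The hard part will be establishing injectivity, equivalently that $\Psi(\psi)\neq 0$ whenever $\psi\neq 0$. I would evaluate $\Psi(\psi)$ on the canonical highest weight vector $v_{\la+\al}\otimes v_{\mu+\be}$, obtaining $\pi_{\nu+\ga}\bigl(\psi(v_\la\otimes v_\mu)\otimes \phi_0(v_\al\otimes v_\be)\bigr)$; since the multiplicity space embeds into the highest weight space of weight $(\la,\mu)$ via $\psi\mapsto \psi(v_\la\otimes v_\mu)$, distinct $\psi$'s produce distinct nonzero highest weight tensors, so injectivity reduces to the claim that $\pi_{\nu+\ga}$ is nonzero on such products. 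I would handle this via explicit Young symmetrizers: realize $\mathbb{S}^\pi(U)$ as the image of $c_\pi$ on $U^{\otimes|\pi|}$, so that $\pi_{\nu+\ga}$ corresponds to the concatenation-and-resymmetrization map $(c_\nu\cdot x)\otimes (c_\ga\cdot y)\mapsto c_{\nu+\ga}\cdot(x\otimes y)$, and verify nonvanishing on the explicit highest weight tensors by a direct calculation. An alternative path would be to invoke the Parthasarathy-Ranga Rao-Varadarajan theorem on nonvanishing of Cartan components for tensors of extremal weight, which provides precisely the required nonvanishing statement in general reductive setting.
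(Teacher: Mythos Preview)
The paper does not prove Theorem~\ref{t:manivel}; it is quoted from~\cite{Man} and used as a black box. So there is no ``paper's proof'' to compare against, and I evaluate your argument on its own merits.

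Your overall architecture is correct and is essentially the approach in~\cite{Man}: build an injection of multiplicity spaces by tensoring with a fixed nonzero equivariant map $\phi_0$ and then projecting to the Cartan component. The construction of $\Psi$ is fine, and you correctly isolate the crux: showing that
\[
\pi_{\nu+\ga}\bigl(\psi(v_\la\otimes v_\mu)\,\otimes\,\phi_0(v_\al\otimes v_\be)\bigr)\ \neq\ 0
\quad\text{whenever}\quad \psi\neq 0.
\]

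However, neither of your proposed ways to close this step works as stated. The PRV theorem concerns projections of tensors of \emph{extremal} weight vectors $v_\nu\otimes v_{w\ga}$ for $w$ in the Weyl group of $GL(V\otimes W)$; but $\psi(v_\la\otimes v_\mu)$ and $\phi_0(v_\al\otimes v_\be)$, while highest weight for the subgroup $GL(V)\times GL(W)$, are in general \emph{not} extremal weight vectors for $GL(V\otimes W)$. So PRV does not apply. A bare Young--symmetrizer calculation on arbitrary tensors is also not a realistic route.

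What you actually need is the following property, special to the Cartan component: the bilinear map $\pi_{\nu+\ga}:\mathbb{S}^\nu(U)\times\mathbb{S}^\ga(U)\to\mathbb{S}^{\nu+\ga}(U)$ has \emph{no zero divisors}, i.e.\ $\pi_{\nu+\ga}(x\otimes y)\neq 0$ whenever $x\neq 0$ and $y\neq 0$. This is how Manivel argues, via Borel--Weil: realize $\mathbb{S}^\pi(U)\cong H^0(G/B,L_\pi)$ for $G=GL(U)$, so that the Cartan projection is literally multiplication of sections inside the graded ring $\bigoplus_\pi H^0(G/B,L_\pi)$, the coordinate ring of the base affine space $G/N$. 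Since $G/N$ is an irreducible variety this ring is an integral domain, giving the no--zero--divisor property. (A more pedestrian variant: realize each $\mathbb{S}^\pi(U)$ as a span of products of minors inside a polynomial ring on matrices; the Cartan product is then ordinary polynomial multiplication.) With this in hand, your steps~(4)--(5) --- that $\psi(v_\la\otimes v_\mu)\neq 0$ and $\phi_0(v_\al\otimes v_\be)\neq 0$ because the sources are irreducible --- finish the proof of injectivity.
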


This result is an extension of the \emph{semigroup property} for the Kronecker coefficients,
which states that the LHS~$>0$ if $g(\la,\mu,\nu)>0$.  This property was
originally attributed to Brion and reproved in~\cite{CHM,Man}.

\smallskip

We also have the following trivial upper bound (see e.g.~\cite[Exc.~7.83]{Sta})~:
\begin{equation}\label{eq:upper-cat}
g(\la,\mu,\nu) \leq \min\{f^\la,f^\mu,f^\nu\} \quad \text{for all} \quad \la,\mu,\nu\vdash n\ts.
\end{equation}

\smallskip

\subsection{Partition asymptotics}\label{ss:basic-asympt}
Denote by \ts $\parti'(n) = \parti(n) - \parti(n-1)$ the number of partitions into
parts~$\ge 2$. We have that $\parti'(n)\ge 1$ for all $n\ge 2$. Recall the following
\emph{Hardy--Ramanujan} and \emph{Roth--Szekeres} formulas,
respectively:
\begin{equation} \label{HR asymptotics}
 \parti(n) \. \sim \. \frac{1}{4\sqrt{3}\ts n} \,\, e^{\pi \sqrt{\frac{2}{3}\ts n}} \,,\quad
  \parti'(n) \. \sim \. \frac{\pi}{\sqrt{6\ts n}} \,\parti(n)  \, \ \quad \text{as } \ n\to\infty\.,
 \end{equation}
see~\cite{RS} (see also~\cite[p.~59]{ER}).

Denote by $b_k(n)$ the number of partitions of $k$ into distinct odd parts~$\le 2n-1$.  We have:
$$
\prod_{i=1}^{n}\. \ts \bigl(1+q^{2i-1}\bigr)\ \. = \, \, \sum_{k=0}^{n^2} \, \. b_k(n) \. q^k\ts.
$$
\begin{thm}[Almkvist] \label{t:almkvist-unimod}
The following sequence is symmetric and unimodal for $n>26$:
$$
(\lozenge)\quad
b_{2}(n)\ts, \, b_{3}(n)\ts, \, \ldots \,, \, b_{n^2-2}(n)\ts.
$$
\end{thm}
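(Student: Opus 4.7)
The proof naturally splits into \emph{symmetry} and \emph{unimodality}, with the latter being considerably harder.

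\textbf{Symmetry.} Write $F_n(q) = \prod_{i=1}^n (1+q^{2i-1})$, a polynomial of degree $\sum_{i=1}^n (2i-1) = n^2$. For each factor, $q^{-(2i-1)}(1+q^{2i-1}) = 1+q^{-(2i-1)}$, so multiplying the substitution $q\mapsto 1/q$ by $q^{n^2}$ gives $F_n(q) = q^{n^2} F_n(1/q)$. Hence the coefficient sequence is palindromic: $b_k(n)=b_{n^2-k}(n)$, and in particular the truncated sequence $b_2(n),\ldots,b_{n^2-2}(n)$ is also symmetric.

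\textbf{Unimodality.} By symmetry, it suffices to show $b_{k}(n)\le b_{k+1}(n)$ for $2\le k < n^2/2$ once $n>26$. The natural route is analytic. Starting from the integral representation
$$
b_k(n) \, = \, \frac{1}{2\pi i}\oint \frac{F_n(z)}{z^{k+1}}\,dz,
$$
apply the saddle point method. The function $q\mapsto qF_n'(q)/F_n(q)$ is strictly increasing on $(0,\infty)$, so there is a unique $r=r(k,n)\in(0,1)$ with $rF_n'(r)/F_n(r)=k$. Integrating along $|z|=r$ and expanding $\log F_n(re^{i\theta})$ to second order in $\theta$ gives a Gaussian integral, yielding an asymptotic of the form
$$
b_k(n) \, = \, \frac{F_n(r)\,r^{-k}}{\sqrt{2\pi\,\sigma^2(r,n)}}\,\bigl(1+O(n^{-1})\bigr),
$$
with $\sigma^2(r,n)$ the second log-derivative of $F_n$ at $r$. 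Comparing the saddle points for $k$ and $k+1$, a direct computation shows that the ratio $b_{k+1}(n)/b_k(n)$ exceeds $1$ throughout the bulk regime, say $n\le k\le n^2/2$, once the error is controlled.

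\textbf{Boundary and effective threshold.} Near the endpoints ($k$ close to $2$ or to $n^2-2$), the saddle point $r$ approaches $0$ or $\infty$ and the Gaussian approximation degrades; here I would switch to the combinatorial decomposition
$$
F_n(q) \, = \, \sum_{k=0}^n q^{k^2}\binom{n}{k}_{q^2},
$$
which follows from the $q$-binomial theorem with $z=q$ and $q\mapsto q^2$, and analyze the low-order coefficients $b_k(n)$ directly, noting that only a few summands contribute. The main obstacle is making the saddle point error estimate sufficiently sharp and \emph{uniform} in $k$ so as to pin down the explicit cutoff $n>26$; in practice one either derives effective tail bounds on $|F_n(re^{i\theta})|$ for $|\theta|$ bounded away from $0$ (via Vinogradov-type estimates for the trigonometric polynomial $\sum_i \log|1+r e^{i(2i-1)\theta}|$), or one combines weaker asymptotic bounds with a finite case-check for moderate $n$ to cover the gap. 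This effective verification, rather than the asymptotic itself, is the crux of the argument.
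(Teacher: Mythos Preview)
Your symmetry argument is fine. For unimodality, however, your outline diverges from Almkvist's actual strategy (which the paper records, citing \cite{A1}, via Lemmas~\ref{l:alm1} and~\ref{l:alm2}), and the divergence is exactly where your ``main obstacle'' lies.

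Almkvist does \emph{not} compare $b_{k+1}(n)/b_k(n)$ by saddle point uniformly over $2\le k\le n^2/2$. Instead, the key device is the elementary recurrence (Lemma~\ref{l:alm1}) for the \emph{differences} $\vartheta_k(n):=b_k(n)-b_{k-1}(n)$: for $k\le 2n+1$ one has $\vartheta_k(n)=\vartheta_k(n-1)$ up to a $\pm 1$ correction at $k=2n\pm 1$, and for $2n+2\le k\le (n-1)^2/2$ one has $\vartheta_k(n)=\vartheta_k(n-1)+\vartheta_{k-2n+1}(n-1)$. By induction on~$n$ this reduces the nonnegativity of $\vartheta_k(n)$ for \emph{all} $k$ to the single ``middle band'' $(n-1)^2/2\le k\le n^2/2$. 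Only there does one invoke an analytic estimate (Lemma~\ref{l:alm2}), which is a contour-integral bound on $\partial b_k(n)/\partial k$ at $k\approx n^2/2$, giving $\vartheta_k(n)\ge C\,2^n/n^{9/2}$ for $n\ge 86$; a finite computer check handles $27\le n\le 86$.

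Your plan requires effective saddle-point control of $b_k(n)$ that is uniform in $k$ across the entire range, and then a separate boundary analysis via the identity $F_n(q)=\sum_j q^{j^2}\binom{n}{j}_{q^2}$. That is substantially more work: the saddle point moves with $k$, the error terms degrade near the edges, and you would still need to convert asymptotics for $b_k(n)$ into a sign for the \emph{difference} $b_{k+1}(n)-b_k(n)$, which is a lower-order quantity. Almkvist's recurrence sidesteps all of this by collapsing the global statement to a local one at the center, where a single, comparatively crude analytic bound suffices. If you want to repair your argument, the missing ingredient is precisely Lemma~\ref{l:alm1}; with it, the analytic part you describe need only be carried out in the narrow window $k\in[(n-1)^2/2,\,n^2/2]$.
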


\bigskip

\section{Proof of Theorem~\ref{t:char_effective}}\label{s:effective-proof}

Denote by $\chi\dwn$ the restriction of the $S_n$-representation $\chi$
to~$A_n$, and by $\psi\upa$ the induced $S_n$-representation of the
$A_n$-representation~$\psi$.  We refer to~\cite[$\S$2.5]{JK} for basic results in representation theory of~$A_n$.
Recall that if $\nu\neq \nu'$, then $\chi^\nu\dwn = \chi^{\nu'}\dwn=\psi^\nu$ is irreducible in~$A_n$.
Similarly, if $\nu=\nu'$, then $\chi^\nu\dwn = \psi^\nu_+ \oplus \psi^\nu_-$, where $\psi^\nu_{\pm}$
are irreducible in~$A_n$, and are related via $\psi^\nu_+[ (12)\pi(12)] = \psi^\nu_-[\pi]$.

Consider now the conjugacy classes of~$A_n$ and the corresponding character values.
Denote by $C^{\alpha}$ the conjugacy class of $S_n$ of permutations of cycle type~$\alpha$, and
by $\cD\ssu \cP$ the set of partitions into distinct odd parts.
We have two cases:

\smallskip

\nin
{\small $\mathbf{(1)}$} \ For $\alpha\notin\cD$, we have $C^\alpha$ is also a conjugacy class of~$A_n$.  Then
$$
\aligned
\chi^\nu\dwn[C^{\alpha}] \,  = \, \chi^\nu[C^\alpha] \quad & \text{if} \ \ \nu \neq \nu'\ts,\\
\psi^\nu_{\pm}[C^\alpha] \,  = \, \frac12 \. \chi^\nu[C^\alpha]\quad & \text{if} \ \ \nu=\nu'\ts.
\endaligned
$$

\nin
{\small $\mathbf{(2)}$} \ For $\alpha\in\cD$, we have $C^\alpha = C^{\alpha}_+ \cup C^{\alpha}_-$, where $C^{\alpha}_{\pm}$ are conjugacy classes
of~$A_n$. Then
$$
\aligned
\chi^\nu \dwn [C^{\alpha}_{\pm}] \,  = \, \chi^\nu [C^\alpha] \quad & \text{if} \ \ \nu \neq \nu'\ts,\\
\psi^{\nu}_{\pm} [C^\alpha_{\pm}] \, = \, \frac12 \chi^\nu[C^\alpha] \quad & \text{if} \ \ \nu = \nu' \ \ \. \text{and} \ \ \. \alpha \neq
\wh\nu\ts, \\
\psi^{\nu}_{\pm}[C^{\wht{\nu}}_+] - \psi^{\nu}_{\pm}[C^{\wht{\nu}}_-] \,  = \, \pm e_{\nu}  \quad & \text{if} \ \ \nu=\nu' \ \ \. \text{and} \ \
\.
e_{\nu} = (\wh\nu_1\ts \wh\nu_2 \ts \cdots\ts)^{1/2}>0\ts.
\endaligned
$$

\nin
Now, by the Frobenius reciprocity, for every \ts $\mu=\mu'$ \ts we have:
$$
\langle \psi^{\mu}_{\pm}\upa , \chi^{\alpha}\rangle =  \langle \psi^{\mu}_\pm , \chi^\alpha\dwn\rangle\ts,
$$
which is nonzero exactly when $\alpha=\mu$ and so $\psi^{\mu}_{\pm}\upa = \chi^\mu$.  This implies
\begin{equation}\label{eq:kron}
\aligned
g(\la,\mu,\mu) \, & = \, \langle \chi^\mu\otimes \chi^\la, \chi^\mu \rangle \, = \,
\langle \chi^\mu\otimes \chi^\la, \ts \psi^\mu_{\pm}\upa \rangle \, = \,
\bigl\langle (\chi^\mu\otimes \chi^\la)\dwn, \psi^\mu_{\pm} \bigr\rangle \\
& = \,
\bigl\langle \psi^\mu_+ \ts \otimes \ts \chi^\la\dwn, \ts \psi^\mu_{\pm} \bigr\rangle \. + \.
\bigl\langle \psi^\mu_-\ts \otimes \ts \chi^\la\dwn, \ts \psi^\mu_{\pm} \bigr\rangle\ts.
\endaligned
\end{equation}

\smallskip
We can now estimate the Kronecker coefficient in the theorem.
First, decompose the following tensor product of the $A_n$ representations:
\begin{equation}\label{eq:decomp}
 \psi^{\mu}_+ \ts \otimes \ts  \chi^\la\dwn \, = \, \oplus_{\tau} \. m_\tau \ts \psi^\tau\ts,
 \end{equation}
where $\psi^\tau$ are all the irreducible representations of~$A_n$, the coefficients~$m_\tau$
are their multiplicities in the above tensor product, and $\tau$ goes over the appropriate indexing.

Note that for any character $\chi$ of $S_n$ and $\pi \in A_n$ we trivially have $\chi\dwn [\pi] =\chi[\pi]$.
Evaluating that tensor product on the classes $C^{\wht\mu}_{\pm}$ gives
$$
\bigl(\psi^\mu_+\otimes \chi^\la\dwn\bigr)\bigl[ C^{\wht{\mu}}_+\bigr] \. - \. \bigl(\psi^\mu_+\otimes \chi^\la\dwn\bigr)\bigl[
C^{\wht{\mu}}_-\bigr] \,
= \,
\chi^\la\dwn\bigl[C^{\wht{\mu}}_{\pm}\bigr] \ts \Bigl( \psi^\mu_+\bigl[C^{\wht\mu}_+\bigr] \. - \. \psi^\mu_+\bigl[C^{\wht\mu}_-\bigr] \Bigr) \,
=\, \chi^\la\bigl[C^{\wht{\mu}}\bigr] \ts e_{\nu} \..
$$
On the other hand, evaluating the right-hand side of equation~\eqref{eq:decomp} gives
$$\aligned
 &\qquad \quad  \bigl(\psi^\mu_+\otimes \chi^\la\dwn \bigr)\bigl[ C^{\wht{\mu}}_+\bigr] \. - \. \bigl(\psi^\mu_+\otimes \chi^\la\dwn\bigr)\bigl[
 C^{\wht{\mu}}_-\bigr]
 \, = \,
\sum_\tau \. m_\tau \Bigl(\psi^\tau\bigl[C^{\wht\mu}_+\bigr] - \psi^\tau\bigl[C^{\wht\mu}_-\bigr]\Bigr) \\
& = \, m_{\mu+}\Bigl(\psi^{\nu}_{+}\bigl[C^{\wht{\nu}}_+\bigr] - \psi^{\nu}_{+}\bigl[C^{\wht{\nu}}_-\bigr]\Bigr)\.
+ \.m_{\mu-}\Bigl(\psi^{\nu}_{-}\bigl[C^{\wht{\nu}}_+\bigr] - \psi^{\nu}_{-}\bigl[C^{\wht{\nu}}_-\bigr]\Bigr) \,
= \, \bigl(m_{\mu+}-m_{\mu-}\bigr) \ts e_{\nu}.
\endaligned
$$
Here we used the fact that all characters are equal at the two classes $C^{\wht{\mu}}_{\pm}$, except for the ones corresponding to~$\mu$.
Equating the evaluations and using $e_\nu>0$, we obtain
$$
m_{\mu+}\ts - \ts m_{\mu-} \. = \. \chi^\la\bigl[C^{\wht{\mu}}\bigr]\ts.
$$
This immediately implies
\begin{equation}\label{eq:max-char}
\max \bigl\{m_{\mu+},m_{\mu-}\bigr\} \, \geq \, \Bigl| \chi^\la\bigl[C^{\wht{\mu}}\bigr] \Bigr|
\end{equation}

On the other hand, since all inner products are nonnegative, the equation~\eqref{eq:kron} gives
$$
g(\la,\mu,\mu) \, \geq \,
\max\left\{\langle \psi^\mu_+\otimes \chi^\la\dwn, \. \psi^\mu_{+} \rangle,
\langle \psi^\mu_+\otimes \chi^\la\dwn, \psi^\mu_{-} \rangle\right\} \, =\,
\max \bigl\{m_{\mu+},m_{\mu-}\bigr\}\.,
$$
and now equation~\eqref{eq:max-char} implies the result. \ $\sq$

\bigskip

\section{Bounds on Kronecker coefficients via characters}\label{s:char}

\subsection{Stanley's theorem} \label{ss:stanley}
We give a new proof of the following technical result
by Stanley \cite[Prop.~11]{Sta-unim}.  Our proof uses Theorem~\ref{t:char_effective}
and Almkvist's Theorem~\ref{t:almkvist-unimod}.  Both results are
crucially used in the next section.

\begin{thm}[Stanley] \label{t:stanley-unimod}
The following polynomial in $q$ is symmetric and unimodal
$$
\binom{2n}{n}_q \, - \, \. \prod_{i=1}^{n}\. \ts \bigl(1+q^{2i-1}\bigr)\..
$$
\end{thm}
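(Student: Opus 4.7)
The plan is to reduce symmetry and unimodality of the polynomial to a single pointwise character bound supplied by Theorem~\ref{t:char_effective}. Writing $a_k := p_k(n,n) - b_k(n)$ for the $q^k$--coefficient of $\binom{2n}{n}_q - \prod_{i=1}^n (1+q^{2i-1})$, the symmetry of $a_k$ about $k = n^2/2$ is inherited from the two summands, so the task reduces to verifying $a_k - a_{k-1}\.\ge\.0$ for $1\le k\le n^2/2$. The key observation is that both differences $p_k-p_{k-1}$ and $b_k-b_{k-1}$ admit natural descriptions attached to the self-conjugate square partition $\mu=(n^n)$, whose principal-hook partition is $\wh\mu=(2n-1,2n-3,\ldots,1)$.

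First I would invoke the Two Coefficients Lemma with $\ell=m=n$ to identify
$$p_k(n,n)-p_{k-1}(n,n) \. = \. g\bigl((n^2-k,k),\ts n^n,\ts n^n\bigr).$$
Next I would derive the companion identity by expanding the two-variable power sum in Schur polynomials,
$$p_{\wh\mu}(x_1,x_2) \. = \. \prod_{i=1}^n \bigl(x_1^{2i-1}+x_2^{2i-1}\bigr) \. = \. \sum_{k=0}^{\lfloor n^2/2\rfloor} \chi^{(n^2-k,k)}[\wh\mu]\ts s_{(n^2-k,k)}(x_1,x_2),$$
specializing $(x_1,x_2)=(1,q)$ and using $s_{(a,b)}(1,q)=q^b+q^{b+1}+\cdots+q^a$ so that a telescoping of the Schur contributions, after comparing coefficients, yields $b_k(n)-b_{k-1}(n)=\chi^{(n^2-k,k)}[\wh\mu]$ for $1\le k\le n^2/2$.

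With both differences so identified, Theorem~\ref{t:char_effective} applied to $\mu=(n^n)$ and $\lambda=(n^2-k,k)$ closes the argument: it gives $g\bigl((n^2-k,k),n^n,n^n\bigr)\.\ge\.\bigl|\chi^{(n^2-k,k)}[\wh\mu]\bigr|$, hence
$$a_k-a_{k-1} \. = \. g\bigl((n^2-k,k),n^n,n^n\bigr) \. - \. \chi^{(n^2-k,k)}[\wh\mu] \. \ge \. \bigl|\chi^{(n^2-k,k)}[\wh\mu]\bigr| \. - \. \chi^{(n^2-k,k)}[\wh\mu] \. \ge \. 0.$$
The step I expect to be the main obstacle is the clean derivation of the character identity for $b_k-b_{k-1}$ via the two-variable Schur expansion; everything else is either a direct citation or a one-line estimate. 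Almkvist's Theorem~\ref{t:almkvist-unimod} enters as a complementary input, certifying $\chi^{(n^2-k,k)}[\wh\mu]\.\ge\.0$ in the interior range $2\le k\le n^2-2$ for $n>26$, and thereby allowing one to drop the absolute value so that the estimate is ready for reuse alongside Kronecker bounds in the $q$--binomial application of Section~\ref{s:qbin}.
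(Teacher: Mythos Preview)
Your proof is correct and follows essentially the same route as the paper's: identify $p_k(n,n)-p_{k-1}(n,n)$ with the Kronecker coefficient $g\bigl((n^2-k,k),n^n,n^n\bigr)$ via the Two Coefficients Lemma, identify $b_k(n)-b_{k-1}(n)$ with the character value $\chi^{(n^2-k,k)}[\wh\mu]$, and then apply Theorem~\ref{t:char_effective}. The only cosmetic difference is that the paper obtains the character identity via the Jacobi--Trudi determinant together with the Murnaghan--Nakayama rule, whereas you obtain it from the Frobenius expansion of $p_{\wh\mu}(x_1,x_2)$ in two-variable Schur polynomials; these are equivalent computations of the same quantity. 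Your explicit use of the absolute value, $g\ge|\chi|\ge\chi$, is exactly what the paper does (it simply writes $g\ge\chi$ without displaying the intermediate step), and your closing remark about Almkvist's Theorem is accurate: it is not needed for Stanley's theorem itself but is used downstream in Section~\ref{s:qbin} to drop the absolute value and feed into the effective bounds.
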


\begin{proof}
Let $\mu=(n^n)$ and $\tau_k=(n^2-k,k)$, where $k\le n^2/2$.
By the two coefficients lemma (Lemma~\ref{l:g_partitions}),
we have
$$g(\tau_k,\mu,\mu) \, = \, p_k(n,n) \. - \. p_{k-1}(n,n)\ts.
$$
By the Jacobi-Trudi identity 
and the  Murnaghan--Nakayama rule, we have:
$$\chi^{\tau_k}[\wht{\mu}] \, = \, \chi^{(n^2-k) \circ (k) }\bigl[\wht{\mu}\bigr] \.
- \. \chi^{(n^2-k+1) \circ (k-1)}\bigl[\wht{\mu}\bigr] \,
=\, b_k(n) - b_{k-1}(n)\ts.
$$
(cf.~\cite{PP,PPV}).
Applying Theorem~\ref{t:char_effective} with $\la=\tau_k$ and $\mu$ as above, we have:
$$p_k(n,n)\. -\. p_{k-1}(n,n) \, = \, g(\la,\mu,\mu) \,
\geq \,\chi^{\la}\bigl[\wht{\mu}\bigr]  \, = \, b_k(n)\. - \. b_{k-1}(n)\ts.
$$
Reordering the terms, we conclude
$$p_k(n,n) \. - \. b_{k}(n) \, \geq \, p_{k-1}(n,n) - b_{k-1}(n)\ts,
$$
which implies unimodality.  The symmetry is straightforward.
\end{proof}

\subsection{Asymptotic applications}
Let $\rho_m=(m,m-1,\ldots,2,1)$ be the
\emph{staircase shape}, $n=|\rho_m|=\binom{m+1}{2}$. The coefficient
$g(\rho_m,\rho_m,\nu)$ first appeared in connection with the
\emph{Saxl conjecture}~\cite{PPV}, and was further studied
in~\cite[$\S$8]{Val2}.

For simplicity, let $m=1$~mod~2, so
\ts $\wh \rho_m = (2m-1,\ldots,5,1)$.  Let $\tau_k=(n-k,k)$.
Applying Theorem~\ref{t:char_effective} and
the Murnaghan--Nakayama rule as above,
we have
$$g\bigl(\rho_m,\rho_m,\tau_k) \, \ge \,
\bigl|\chi^{\tau_k}\bigl[\wh \rho_m\bigr]\ts\bigr| \, = \, \parti_R(k) \. - \.
\parti_R(k-1)\ts,$$
where $\parti_R(k)$ is the number of partitions of~$k$ into parts from
$R=\{1,5,\ldots,2m-1\}$.

In the ``small case'' \ts $k \le 2m$, by the Roth--Szekeres theorem~\cite{RS},
we have:
$$g\bigl(\rho_m,\rho_m,\tau_k) \, \ge \, \parti_R(k) \. - \.
\parti_R(k-1) \, \sim \, \frac{\pi\ts\sqrt{2}}{3\ts k^{3/2}} \.
e^{\pi\sqrt{k/6}}\.,
$$
i.e.~independent of~$n$.
On the other hand, by equation~\eqref{eq:upper-cat}, we have
$$
g\bigl(\rho_m,\rho_m,\tau_k) \, \le \, f^{\tau_k} \, < \, \frac{n^k}{k!}\ts,
$$
leaving a substantial gap between the upper and lower bounds.
For $k=O(1)$ bounded, Theorem~8.10 in~\cite{Val2}, gives
$$
g\bigl(\rho_m,\rho_m,\tau_k) \, \sim \, m^k \, \sim \, (2\ts n)^{k/2}\quad
\text{as} \ \
n\to \infty \ts,
$$
suggesting that the upper bound is closer to the truth.  In fact, the proof
in~\cite{Val2} seems to hold for all $k=o(m)$.

In the ``large case'' \ts $k =n/2\sim m^2/4$, the Odlyzko--Richmond result (\cite[Thm.~3]{OR}) gives
$$g\bigl(\rho_m,\rho_m,\tau_k) \, \ge \,
\parti_R(k) \. - \.
\parti_R(k-1) \, \sim \, \frac{3^{3/2}}{2^{15/4}\ts \sqrt{\pi} \ts m^3} \. 2^{m/4}
\, \sim \, \frac{3^{3/2}}{2^{47/4}\ts \sqrt{\pi} \ts k^{3/2}} \. \. 2^{\sqrt{k}/2}
\ts.
$$
For the upper bound, equation~\eqref{eq:upper-cat} gives
$$g\bigl(\rho_m,\rho_m,\tau_k) \, \le \, f^{\tau_k} \, \lesssim \, \frac{1}{\sqrt{\pi} \ts k^{3/2}} \, 4^k\ts.
$$

\subsection{Lower bounds for border equal partitions} \label{ss:border}
Two partitions $\la,\mu\vdash n$ are called \emph{$s$-border equal} if
they have equal the first $s$ principal hooks.  By $\la^\bos$ denote the partition
with the first $s$~rows and $s$~columns removed.

\begin{cor} \label{cor:border}
Let $\la, \mu \vdash n$ be $s$-border equal partitions such that
$\mu=\mu'$ is self-conjugate.  Denote by $\al=\la^\bos$, $\be = \mu^\bos$,
and let $\wh\be=(2\be_1-1,2\be_2-3,\ldots)\vdash n$. Then:
$$
g(\la, \mu,\mu) \, \geq \, \bigl|\ts\chi^\al[\wh\be] \ts \bigr|\..
$$
\end{cor}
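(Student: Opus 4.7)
The plan is to deduce Corollary~\ref{cor:border} from Theorem~\ref{t:char_effective} by an iterated Murnaghan--Nakayama computation. Since $\mu=\mu'$, Theorem~\ref{t:char_effective} applies directly to the pair $(\la,\mu)$ and yields $g(\la,\mu,\mu)\ge|\chi^\la[\wh\mu]|$. The first $s$ parts of $\wh\mu$ are the principal hooks $h_{11}(\mu),\ldots,h_{ss}(\mu)$, which by the $s$-border equality coincide with $h_{11}(\la),\ldots,h_{ss}(\la)$.

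The key hook-theoretic input is that $h_{11}(\pi)=\pi_1+\pi'_1-1$ is the \emph{strict} maximum hook length of any partition $\pi$: for $(i,j)\ne(1,1)$ one has $h_{ij}(\pi)\le \pi_1+\pi'_1-(i+j-1)<h_{11}(\pi)$. Consequently $\pi$ admits a unique border strip of size $h_{11}(\pi)$, and its removal produces the partition $\pi^\boss$. Thus the first Murnaghan--Nakayama step applied to $\chi^\la[\wh\mu]$ collapses to a single term, transforming $(\la,\wh\mu)\mapsto(\la^\boss,(\wh\mu_2,\wh\mu_3,\ldots))$ up to a sign $(-1)^{\la'_1-1}$. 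Using the identity $h_{ij}(\pi^\boss)=h_{i+1,j+1}(\pi)$ together with the $s$-border equality, the compatibility ``top principal hook of the current shape equals first residual cycle length'' propagates from one step to the next, so one may iterate $s$ times and arrive at $\la^\bos=\al$ with residual cycle type $(\wh\mu_{s+1},\wh\mu_{s+2},\ldots)$, up to an overall sign.

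A short calculation confirms that this residual cycle type is exactly $\wh\be$: since $\be_i=\mu_{s+i}-s$ and $\be=\be'$, one has $\wh\be_i=2\be_i-(2i-1)=2\mu_{s+i}-(2s+2i-1)=\wh\mu_{s+i}$. Therefore $\chi^\la[\wh\mu]=\pm\,\chi^\al[\wh\be]$, and taking absolute values together with $g(\la,\mu,\mu)\ge|\chi^\la[\wh\mu]|$ yields the desired inequality. The only point requiring care is the uniqueness of the maximum-hook border strip and its compatibility with $\pi\mapsto\pi^\boss$, both of which are classical (e.g.~via the abacus description of rim-hook removal), so no real obstacle arises.
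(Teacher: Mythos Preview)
Your proof is correct and follows essentially the same approach as the paper: apply Theorem~\ref{t:char_effective} to obtain $g(\la,\mu,\mu)\ge|\chi^\la[\wh\mu]|$, then use the Murnaghan--Nakayama rule to peel off the first $s$ principal hooks uniquely and conclude $|\chi^\la[\wh\mu]|=|\chi^\al[\wh\be]|$. The paper states the uniqueness of the rim-hook removals without justification, whereas you supply the details (strict maximality of $h_{11}$, identification of the residual cycle type with $\wh\be$), but the argument is the same.
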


\begin{proof}
By the Murnaghan--Nakayama rule, for the $s$-border equal partitions $\la$ and~$\mu$,
there is a unique way to fit the first~$s$ rim hooks of length
$(\wh\mu_1,\ldots,\wh\mu_s)$ into the shape~$\la$.  Therefore, we have
$$\bigl|\ts\chi^\la[\wh\mu] \ts \bigr| \, = \, \bigl|\ts\chi^\al[\wh\be] \ts \bigr|\..
$$
Now Theorem~\ref{t:char_effective}  implies the result.
\end{proof}

\begin{ex} {\rm
Fix $r\ge 4$ and $s\ge 0$.  Consider $\la= \bigl(2r^2+2r+s+1\bigr)^{s+1}\bigl(s+1\bigr)^{2r^2+2r}$,
$\mu = \bigl(2r^2+2r+s+1\bigr)^s(2r+s+1)^{2r+1}s^{2r^2}$, and observe that $|\la|=|\mu|$,
$\mu = \mu'$.  Furthermore, $\la$ and $\mu$ are $s$-border equal. In notation of the corollary, we have $\al = \bigl(2r^2+2r+1,1^{2r^2+2r}\bigr)$,
$\be=\bigl(2r+1\bigr)^{2r+1}$, and $\wh \be = (4r+1,4r-1,\ldots,3,1)$.  Using
Corollary~\ref{cor:border}, the Murnaghan--Nakayama rule and the Giambelli formula as in~\cite{PPV},
we conclude that
$$
g(\la,\mu,\mu) \, \ge \,\, \bigl|\ts\chi^\al[\wh\be] \ts \bigr| \, \ge \, b_{k}(2r+1) \ts - \ts b_{k-1}(2r+1)\ts.
$$
 We therefore have:
\begin{align}\label{eq:dimo} \qquad \
g(\la,\mu,\mu) \, \ge \, 0.32 \ts\frac{2^{\sqrt{2k} }}{(2k)^{9/4}} \, \geq \,
\frac{4^r} {3\ts (2r)^{9/2} }\,\,, \,\, \quad \text{where} \quad k \ts =\ts 2\ts r\ts (r+1)\ts.
\end{align}
where the inequality~\eqref{eq:dimo} follows from Theorem~\ref{t:almkvist_eff} given in the next section (note that $r\ge 4$ implies $k \geq 26$,
assumed in the theorem).  We omit the details.
}\end{ex}

\bigskip

\section{Bounds on the $q$-binomial coefficients}\label{s:qbin}

\subsection{Analytic estimates} \label{ss:qbin-alm}
The proof of Almkvist's Theorem~\ref{t:almkvist-unimod} is based on the
following technical results.

\begin{lemma}[\cite{A1}]  \label{l:alm1}
For $3\leq k \leq 2n+1$, we have:
$$
b_k(n)\ts - \ts b_{k-1}(n) \, = \, \begin{cases} b_k(n-1) \ts - \ts b_{k-1}(n-1)\ts, & k\neq 2n\pm 1,\\
b_k(n-1)-b_{k-1}(n-1)+1 \ts, & k=2n-1 \\
b_k(n-1)-b_{k-1}(n-1)-1 \ts, & k=2n+1\\
\end{cases}
$$
Similarly, for $2n+2\leq k\leq (n-1)^2/2$, we have:
$$
b_k(n)\ts -\ts b_{k-1}(n) \, = \, b_k(n-1)\ts -\ts b_{k-1}(n-1) \ts +\ts b_{k-2n+1}(n-1)\ts -\ts b_{k-2n}(n-1)\ts.
$$
\end{lemma}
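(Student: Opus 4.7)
The plan is to derive both parts of the lemma from a single one-step generating function recurrence for $b_k(n)$, and then dispatch the various cases by examining the small-index values $b_0(n-1)=b_1(n-1)=1$ and $b_2(n-1)=0$.

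First I would split off the factor $(1+q^{2n-1})$ in the defining product, which gives
\[
\sum_{k\ge 0} b_k(n)\ts q^k \, = \, \bigl(1+q^{2n-1}\bigr)\ts\sum_{k\ge 0} b_k(n-1)\ts q^k\ts,
\]
and hence, with the convention $b_j(m)=0$ for $j<0$ or $j>m^2$,
\[
b_k(n) \, = \, b_k(n-1) \. + \. b_{k-2n+1}(n-1)\ts.
\]
Subtracting the same identity for $k-1$ from the one for $k$ yields the master formula
\[
b_k(n)\ts -\ts b_{k-1}(n) \, = \, \bigl[b_k(n-1)-b_{k-1}(n-1)\bigr] \. + \. \bigl[b_{k-2n+1}(n-1)-b_{k-2n}(n-1)\bigr]\ts,
\]
which is what I will use throughout.

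Next, I would read off the four regimes from this master formula. For $3\le k\le 2n-2$, both indices $k-2n+1$ and $k-2n$ are negative, so the second bracket vanishes, giving the first case of the lemma with no correction term. For $k=2n-1$ we have $b_0(n-1)-b_{-1}(n-1)=1-0=1$, producing the $+1$ correction. For $k=2n$ we have $b_1(n-1)-b_0(n-1)=1-1=0$, which again gives the ``unperturbed'' identity and is therefore absorbed into the first case (the hypothesis $k\neq 2n\pm 1$). For $k=2n+1$ we have $b_2(n-1)-b_1(n-1)=0-1=-1$, giving the $-1$ correction. Finally, for $2n+2\le k\le (n-1)^2/2$ no such simplification occurs, and the master formula is exactly the second asserted identity; the upper bound $(n-1)^2/2$ is only relevant in so far as it keeps $k$ and $k-2n+1$ below the symmetry midpoint of $b_\bu(n-1)$, but it is not needed for the algebraic identity itself.

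There is no real obstacle here: the entire argument is bookkeeping on the generating function identity, and the only thing to watch is the three small values $b_{-1}(n-1)=0$, $b_0(n-1)=b_1(n-1)=1$, $b_2(n-1)=0$ that govern the exceptional cases $k=2n-1,2n,2n+1$. I would present the derivation uniformly, writing the master formula once and then tabulating these four boundary evaluations rather than re-deriving each case from scratch.
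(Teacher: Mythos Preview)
Your argument is correct. The paper does not actually prove this lemma; it is quoted from Almkvist~\cite{A1} without proof, so there is nothing to compare against in the paper itself. Your generating-function derivation --- splitting off the factor $(1+q^{2n-1})$ to get $b_k(n)=b_k(n-1)+b_{k-2n+1}(n-1)$, differencing, and then evaluating the boundary terms $b_{-1}(n-1)=0$, $b_0(n-1)=b_1(n-1)=1$, $b_2(n-1)=0$ --- is exactly the standard proof and matches what Almkvist does in the cited source.
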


\smallskip

\begin{lemma}[\cite{A1}]  \label{l:alm2}
For $n\geq 86$ and $(n-1)^2/2\leq k \leq n^2/2$, we have:
$$
b_k(n) \ts -\ts b_{k-1}(n) \, \geq \, C \. \frac{2^n}{n^{9/2} }\,,
\quad \text{where} \quad C\ts = \. \frac{3\ts\sqrt{3}}{2 \sqrt{2} \ts \pi^{3/2}} \. \approx \ts 0.329\..
$$
\end{lemma}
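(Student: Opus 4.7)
The plan is a saddle-point / Fourier analysis of the generating polynomial
\[
G_n(q) \, := \, (1-q)\prod_{i=1}^{n}(1+q^{2i-1}),
\]
whose coefficient of $q^k$ is exactly $b_k(n)-b_{k-1}(n)$. First I would extract this coefficient as a Fourier integral on the unit circle and simplify it trigonometrically using $(1+e^{i\alpha}) = 2e^{i\alpha/2}\cos(\alpha/2)$ on each factor (combined with $\sum_{i=1}^{n}(2i-1)=n^2$) and $(1-e^{i\theta}) = -2ie^{i\theta/2}\sin(\theta/2)$. This produces the explicitly real-valued expression
\[
b_k(n)-b_{k-1}(n) \, = \, \frac{2^n}{\pi}\int_{0}^{\pi} \sin\!\left(\frac{\theta}{2}\right)\sin(j\theta)\prod_{i=1}^{n}\cos\!\left(\frac{(2i-1)\theta}{2}\right)d\theta,
\]
where $j := (n^2+1)/2 - k$. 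For $k$ in the stated range $(n-1)^2/2 \le k \le n^2/2$ one has $j \in [1/2,\,n]$, and the worst (smallest) value is $j = 1/2$, attained when $n$ is even and $k = n^2/2$.

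The second step is a Gaussian approximation at $\theta=0$. Set
\[
\tau^2 \, := \, \tfrac{1}{4}\sum_{i=1}^{n}(2i-1)^2 \, = \, \frac{n(4n^2-1)}{12} \, \sim \, \frac{n^3}{3}.
\]
Using $\log\cos(x) = -x^2/2 - x^4/12 + O(x^6)$, on a window $|\theta|\le\theta_0$ with $n^{-3/2}\ll\theta_0\ll n^{-5/4}$ one has $\prod_i\cos((2i-1)\theta/2) = e^{-\tau^2\theta^2/2}(1+o(1))$ and $\sin(\theta/2) = \theta/2 + O(\theta^3)$. Extending the integral to the whole real line at exponentially small cost, the dominant contribution is the classical Gaussian moment
\[
\int_{-\infty}^{\infty}\!\theta\sin(j\theta)\,e^{-\tau^2\theta^2/2}\,d\theta \, = \, \frac{j\sqrt{2\pi}}{\tau^3}\,e^{-j^2/(2\tau^2)},
\]
yielding a leading-order value $\sim \frac{j\cdot 2^n}{\sqrt{2\pi}\,\tau^3}\,e^{-j^2/(2\tau^2)}$. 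Since $\tau^3 \sim n^{9/2}/(3\sqrt{3})$ and $j^2/(2\tau^2) = O(1/n)$ uniformly for $j\le n$, this becomes $(1-o(1))\cdot\frac{3\sqrt{3}}{2\sqrt{2\pi}}\cdot\frac{2^n}{n^{9/2}}$ at $j=1/2$, already of the correct shape with a \emph{larger} leading constant than the one claimed.

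The main obstacle is a rigorous, uniform control of the error sources so that the conservative constant $C = 3\sqrt{3}/(2\sqrt{2}\pi^{3/2})$ is achieved for every $n \ge 86$. Three pieces must be handled: (a) the truncation error from replacing the product of cosines by its Gaussian surrogate on the central window, which is $O(\tau_4\theta_0^4)$ in the exponent with $\tau_4 := \sum(2i-1)^4 = \Theta(n^5)$, and can be made arbitrarily small by a suitable choice of $\theta_0$; (b) the contribution from the middle range $\theta_0 \le \theta \le \pi-\theta_0$, where for most indices $i$ the factor $|\cos((2i-1)\theta/2)|$ is strictly less than $1$, giving geometric decay of the product to something exponentially smaller than the main term; and (c) the contribution near $\theta = \pi$, where $\cos((2i-1)\pi/2)=0$ for every $i$ forces the product to vanish identically at $\theta = \pi$, and a local Laplace analysis mirroring that at $\theta=0$ applies via the change of variables $\theta \mapsto \pi-\theta$. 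The hypothesis $n\ge 86$ leaves a comfortable buffer: the factor $\pi/\sqrt{2} \approx 2.22$ between the sharp leading constant $3\sqrt{3}/(2\sqrt{2\pi})\approx 1.04$ and the claimed $C \approx 0.33$ is precisely what absorbs these three errors and furnishes the stated effective lower bound.
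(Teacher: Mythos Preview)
Your proposal is correct and follows essentially the same Fourier-analytic / saddle-point method that Almkvist carries out in~\cite{A1}, which the paper's own proof simply \emph{cites}: it quotes Almkvist's explicit bounds on the pieces $I_1,I_2,I_3,I_4$ of the integral and then checks that for $n\ge 86$ one has $\tfrac12 I_1 \ge |I_2|+|I_3|+|I_4|$, which is exactly your ``comfortable buffer'' argument made numerical. The only cosmetic differences are that Almkvist reaches the difference via $\partial b_k(n)/\partial k$ on the integral representation whereas you multiply the generating polynomial by $(1-q)$, and that your arithmetic for the sharp leading constant is off by a factor: the correct value at $j=1/2$ is $3\sqrt{3}/(4\sqrt{2\pi})\approx 0.52$, so the ratio to $C$ is $\pi/2$, not $\pi/\sqrt{2}$.
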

\begin{proof}
We invoke details from the computations in~\cite{A1}. It is shown there that 
$$ \frac{ \partial b_k(n)}{\partial k} \geq \frac{2^{n+2}}{\pi} I \geq \frac{2^{n+3}}{\pi} ( I_1 - |I_2| - |I_3| - |I_4|),$$
where $I_j$ are certain explicit integrals. By the bounds in this proof we have:
\begin{align}\label{eq:I_ineq}
I_1 \geq \frac{3 \sqrt{3}}{4\sqrt{2 \pi} } \frac{1}{n^{9/2} }, \qquad |I_2| \leq \frac{7 \pi^3}{ 96 n^2}\exp\left(- \frac{(5\pi-6)n}{16\pi} + \frac{\pi}{32 n} \right), \\
|I_3| \leq \frac{\pi^2}{8}\exp\left( - \frac{(5\pi -3)n}{16 \pi} +\frac{\pi}{16 n} \right), \qquad |I_4| \leq \frac{\pi \sqrt{\pi n} }{4} \left( \frac{3}{\sqrt{2 e} n} \right)^n\end{align}
A calculation shows that for $n\geq 86$ we have
\begin{align*}
\frac{1}{2} \frac{3 \sqrt{3}}{4\sqrt{2 \pi} } \frac{1}{n^{9/2} }&\geq \frac{7 \pi^3}{ 96 n^2}\exp\left(- \frac{(5\pi-6)n}{16\pi} - \frac{\pi}{32 n} \right) \\ &+\frac{\pi^2}{8}\exp\left( - \frac{(5\pi -3)n}{16 \pi} +\frac{\pi}{16 n} \right) + \frac{\pi \sqrt{\pi n} }{4} \left( \frac{3}{\sqrt{2 e} n} \right)^n . \end{align*}
Applying this to the inequality~\eqref{eq:I_ineq} we get
$$ b_k(n) - b_{k-1}(n) \geq \frac{\partial b_k(n)}{\partial k}  \geq \frac{2^{n+2}}{\pi} ( I - |I_2| -|I_3| - |I_4|) \geq  \frac{2^{n+1}}{\pi} \frac{3 \sqrt{3}}{4\sqrt{2 \pi} } \frac{1}{n^{9/2} }.$$
\end{proof}

Based on this setup, we refine Almkvist's Theorem~\ref{t:almkvist-unimod} as follows

\begin{thm}\label{t:almkvist_eff}
For any $n\geq 31$, and $26 \leq k \leq  n^2/2$ we have:
$$
b_k(n)\. -\. b_{k-1}(n) \, \geq \, C \.  2^{\sqrt{2\ts k}} \ts \frac{ 1}{(2\ts k)^{9/4} }\.,
\quad \text{where \ts $C$ \ts is as above.}
$$
\end{thm}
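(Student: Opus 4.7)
The plan is to pin the bound at the ``critical'' index $n^\star := \lceil \sqrt{2k}\ts\rceil$, where $(n^\star-1)^2/2 < k \le (n^\star)^2/2$ and Lemma~\ref{l:alm2} directly applies (provided $n^\star \ge 86$). A bound comparison plus a monotonicity argument in $n \ge n^\star$ will then deliver the theorem, with a separate treatment needed for small $k$.

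At $n^\star$, Lemma~\ref{l:alm2} yields $b_k(n^\star) - b_{k-1}(n^\star) \ge C\cdot 2^{n^\star}/(n^\star)^{9/2}$. Writing $n^\star = \sqrt{2k} + \varepsilon$ with $\varepsilon \in [0, 1)$, the desired inequality
$$
\frac{2^{n^\star}}{(n^\star)^{9/2}} \, \ge \, \frac{2^{\sqrt{2k}}}{(2k)^{9/4}}
$$
reduces to $2^\varepsilon \ge \bigl(1 + \varepsilon(2\sqrt{2k}+\varepsilon)/(2k)\bigr)^{9/4}$, which follows for $k \ge 26$ from a comparison of derivatives at $\varepsilon = 0$ (since $\ln 2 \ge 9/(2\sqrt{2k})$ there) combined with the fact that the left side grows exponentially while the right is polynomial-type in $\varepsilon$. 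To then propagate from $n^\star$ to $n \ge n^\star$, I use Lemma~\ref{l:alm1} and write $D(n) := b_k(n) - b_{k-1}(n)$: in the ``big'' regime $2n+2 \le k \le (n-1)^2/2$, each transition $D(n) - D(n-1) = b_{k-2n+1}(n-1) - b_{k-2n}(n-1) \ge 0$ by Almkvist's unimodality (Theorem~\ref{t:almkvist-unimod}); in the ``small'' regime $k \le 2n+1$, the transition vanishes except at $n = (k-1)/2$ (a $-1$, only for $k$ odd) and the compensating $+1$ at $n = (k+1)/2$. Hence $D(n) \ge D(n^\star)$ for $n \ge (k+1)/2$ and at worst $D(n) \ge D(n^\star) - 1$ during the temporary dip, and since the Lemma~\ref{l:alm2} bound exceeds the target by more than $1$ once $k$ is large enough, combining with the comparison gives the theorem whenever $n^\star \ge 86$.

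The main obstacle is the finite range $26 \le k < 3613$ in which $n^\star < 86$ and Lemma~\ref{l:alm2} cannot be invoked directly. For these $k$ the target bound is of moderate size, and I would close the argument either by (a) directly computing $b_k(n)$ for $n$ in the relevant range via the generating function $\prod_i(1+q^{2i-1})$, noting that $D(n)$ stabilizes to the positive integer $q_k - q_{k-1}$ once $n \ge (k+1)/2$ (so the inequality becomes a finite check); or by (b) refining the integral estimates in the proof of Lemma~\ref{l:alm2} to weaken the hypothesis from $n \ge 86$ down to $n \ge 31$, with the stated constant $C$ preserved by using the slack in the earlier bound comparison. This ``small-$k$ bootstrap'' is the technical heart of the proof, as it bridges the finite combinatorial regime (where bounds are verified numerically) and the asymptotic saddle-point analysis of Lemma~\ref{l:alm2}.
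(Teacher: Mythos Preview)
Your approach is essentially the same as the paper's: pin the estimate at the critical index $n^\star$ with $(n^\star-1)^2/2 \le k \le (n^\star)^2/2$, invoke Lemma~\ref{l:alm2} there, compare $2^{n^\star}/(n^\star)^{9/2}$ to $2^{\sqrt{2k}}/(2k)^{9/4}$, propagate to larger $n$ via the recurrences of Lemma~\ref{l:alm1} together with Almkvist's unimodality, and close the small-$k$ gap by a finite computer check. The paper carries this out with two small simplifications worth adopting. First, the bound comparison is done in one line: since $g(x)=x\log 2-\tfrac{9}{2}\log x$ is increasing for $x>9/(2\log 2)\approx 6.49$ and $n^\star\ge\sqrt{2k}\ge\sqrt{52}$, one gets $2^{n^\star}/(n^\star)^{9/2}\ge 2^{\sqrt{2k}}/(2k)^{9/4}$ directly, with no $\varepsilon$-expansion needed. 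Second, for the range $n^\star<86$ the paper simply verifies the inequality numerically for all $n\in\{31,\dots,86\}$ and $26\le k\le n^2/2$, then uses the monotonicity to reduce $n>86$ with $k\le 86^2/2$ back to $n=86$; your option~(b) of sharpening the analytic estimate down to $n\ge 31$ is not pursued. Your treatment of the single $-1$ ``dip'' at $k=2n+1$ is handled in the paper in the same spirit; note that the slack needed to absorb it is genuinely available once one is in the big regime, but you should make that step explicit rather than asserting that the bound exceeds the target ``by more than~$1$''.
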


\begin{proof}
Denote
$$\da_k(n) \, = \, b_k(n)\. -\. b_{k-1}(n)\ts.
$$
First, let \ts $n\geq 83$ and \ts $(n-1)^2/2 \leq k \leq n^2/2$.  By Lemma~\ref{l:alm2}, we have
\begin{equation}\label{eq:aml-mid}
\da_k(n) \, \geq \, C \. 2^n \. \frac{ 1 }{n^{9/2} } \, \geq \, C \. 2^{\sqrt{2\ts k}} \. \frac{ 1}{(2k)^{9/4} }\,,
\end{equation}
where the last inequality follows since the function \ts $f(x)=\log2 \sqrt{x} -9/4 \log (x)$ \ts
is increasing.

The recurrence relations in Lemma~\ref{l:alm1} and Almkvist's Theorem $\vt_k(n)\ge 0$ give
$$
\da_k(n) \, \geq \, \da_k(n-1) \quad \text{for all} \quad  3 \le k \le n^2/2 \ts , \; k\neq 2n+1 \ts
$$
and $\da_{2n+1}(n) \, = \, \da_{2n+1}(n-1)-1 \, =\, \da_{2n+1}(n-4)$.
Now, let $r$ be such that $(n-r-1)^2/2 \leq k \leq (n-r)^2/2$, and $n-r \geq 83$.
Applying~\eqref{eq:aml-mid} to $(n-r)$, we conclude:
$$\da_k(n)\, \geq \, \da_k(n-r) \, \geq \, C \. 2^{\sqrt{2k}} \. \frac{ 1}{(2k)^{9/4} }\,.
$$
Next, we check by computer that the inequality in the theorem holds for all $n \in \{31,\ldots,86\}$ and $26 \leq k \leq n^2/2$. Finally, for
$k\leq 86^2/2$ and $n > 85$, we apply the inequalities of Lemma~\ref{l:alm1} repeatedly to obtain
$$\da_k(n)\, \geq \, \da_k(86) \, \geq \, C \. 2^{\sqrt{2k}} \. \frac{ 1}{(2k)^{9/4} }\,. \hfill \qedhere$$
\end{proof}

\begin{cor}\label{c:square_bin}
Let $n \geq 8$, $1 \le k \le n^2/2$, $\mu=(n^n)$ and $\tau_k=(n^2-k, k)$.  Then
$$
g\bigl(\mu,\mu,\tau_k \bigr)\, \geq \, C \. \frac{ 2^{\sqrt{2\ts k}} }{(2\ts k)^{9/4} }\,,
\quad \text{where} \quad C\ts = \. \frac{\sqrt{27/8}}{\pi^{3/2}}\..
$$
\end{cor}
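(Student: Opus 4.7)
The plan is to chain together the three main tools already assembled in the excerpt: the Two Coefficients Lemma, Stanley's unimodality theorem, and the effective version of Almkvist's theorem. First, I would apply Lemma~\ref{l:g_partitions} with $\ell=m=n$ to get the identity
$$g(\mu,\mu,\tau_k) \, = \, p_k(n,n) \. - \. p_{k-1}(n,n).$$
Next, Stanley's Theorem~\ref{t:stanley-unimod} says that the polynomial $\binom{2n}{n}_q - \prod_{i=1}^n(1+q^{2i-1})$ is symmetric and unimodal, and so the differences of its consecutive coefficients are nonnegative throughout $1 \le k \le n^2/2$. Rearranging, this yields
$$p_k(n,n) \. - \. p_{k-1}(n,n) \, \ge \, b_k(n) \. - \. b_{k-1}(n).$$

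Combining the two displays reduces the corollary to a lower bound on $b_k(n)-b_{k-1}(n)$ of the shape furnished by Theorem~\ref{t:almkvist_eff}. For the ``generic'' range $n\ge 31$ and $26 \le k \le n^2/2$, that theorem delivers exactly the required estimate $C\.2^{\sqrt{2k}}/(2k)^{9/4}$, with the same constant $C=\sqrt{27/8}/\pi^{3/2}$ inherited from Lemma~\ref{l:alm2}. So the bulk of the corollary is immediate from this pipeline.

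The remaining work is therefore to sweep up the boundary cases not covered by Theorem~\ref{t:almkvist_eff}, namely (i) small~$n$ with $8 \le n \le 30$, and (ii) small~$k$ with $2 \le k \le 25$ (treating $k=1$ separately or excluding it, as in Theorem~\ref{t:qbin-main}). In both regimes the right-hand side $C\.2^{\sqrt{2k}}/(2k)^{9/4}$ is bounded by a modest absolute constant, while the left-hand side $g(\mu,\mu,\tau_k)=p_k(n,n)-p_{k-1}(n,n)$ is a positive integer for $k\ge 2$ and grows rapidly in $n$ for fixed $k$; in particular it is easy to verify (using, e.g., the closed-form $p_k$ for $k\le 25$, or Lemma~\ref{l:alm1} combined with the recursion $p_k(n,n)\ge p_k(n-1,n-1)$ once $k\le n$) that the inequality holds. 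Finally, for $8\le n\le 30$ and the ``middle'' range of $k$, one simply tabulates the finitely many values of $p_k(n,n)-p_{k-1}(n,n)$ and compares against $C\.2^{\sqrt{2k}}/(2k)^{9/4}$.

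The only real obstacle is the bookkeeping in the base cases: one must ensure the effective constant $C$ actually works uniformly across all the small-$n$, small-$k$ windows rather than only in the asymptotic regime supplied by Theorem~\ref{t:almkvist_eff}. I would expect this to reduce to a short finite check, completely analogous to the one carried out inside the proof of Theorem~\ref{t:almkvist_eff} for $31\le n\le 86$. Once those cases are verified, combining them with the generic bound completes the proof. \ $\sq$
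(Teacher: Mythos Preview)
Your proposal is correct and follows essentially the same approach as the paper: combine the Two Coefficients Lemma, the inequality $p_k(n,n)-p_{k-1}(n,n)\ge b_k(n)-b_{k-1}(n)$ from Stanley's theorem, and Theorem~\ref{t:almkvist_eff} for the generic range $n\ge 31$, $26\le k\le n^2/2$, then dispose of the remaining finitely many cases by direct computation. The paper's proof streamlines your small-$k$ step with the single observation that $p_k(n,n)=p_k(26,26)$ whenever $k\le 26$, so only one value of~$n$ needs to be tabulated there; otherwise the two arguments coincide.
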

\begin{proof}
Following the proof of Stanley's Theorem~\ref{t:stanley-unimod}, for all $26\leq k\leq n^2/2$ and $n\geq 31$
Theorem~\ref{t:almkvist_eff} gives:
$$
g\bigl(\mu,\mu,\tau_k \bigr)\, = \, p_k(n,n) - p_{k-1}(n,n) \,\geq
\, b_k(n)-b_{k-1}(n) \, \geq \, C \. \frac{ 2^{\sqrt{2\ts k}} }{(2\ts k)^{9/4} }\. .
$$
For the remaining values of $n$ and $k$ we check the inequality by computer, noticing that $p_k(n,n) =p_k(26,26)$ when $k\leq 26$.
\end{proof}


\subsection{Partitions in rectangles} \label{ss:qbin-rect}
%
Let $\tau_k=(m\ell -k,k)$, by Lemma~\ref{l:g_partitions}, we have
$$\pq_k(\ell,m) \, := \, p_k(\ell,m) \. - \. p_{k-1}(\ell,m) \, = \, g(m^\ell,m^\ell,\tau_k)\ts.
$$

\begin{thm}\label{t:qbin-rect}
Let $8\le \ell \le m$ and $1\le k \le m\ell/2$.
Define $n$ as $$n = \begin{cases} 2\lfloor \frac{\ell-8}{2} \rfloor ,& \text{ when $\ell m$ is even, }\\
2 \lfloor \frac{\ell-8}{2}\rfloor -1, & \text{ when $\ell m$ is odd, }
\end{cases}$$
and let $v=\min(k, n^2/2)$. Then:
$$
\pq_k(\ell,m) \, \geq \, C \frac{ 2^{\sqrt{2v}} }{(2v)^{9/4} } \qquad
\text{where} \quad C \. = \. \frac{3\sqrt{3}}{2\sqrt{2}\.\pi^{3/2}}\..
$$
\end{thm}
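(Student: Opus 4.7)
The plan is to deduce Theorem~\ref{t:qbin-rect} from the square estimate of Corollary~\ref{c:square_bin} by two applications of Manivel's monotonicity (Theorem~\ref{t:manivel}), bridged by the conjugation identity $g(\la,\mu,\nu)=g(\la',\mu',\nu)$. The square $n^n$ carries the target lower bound at $\tau_v$ by Corollary~\ref{c:square_bin}; I propagate it first to $(\ell^n,\ell^n,\tau_{v+j_1})$ by appending the rectangular block $(\ell-n)^n$, then conjugate to $(n^\ell,n^\ell,\tau_{v+j_1})$, and finally reach $(m^\ell,m^\ell,\tau_k)$ by appending $(m-n)^\ell$. The third partition is incremented from $\tau_v$ up to $\tau_k$ by splitting $k-v$ between the two addition steps.

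Concretely, set $v=\min(k,n^2/2)$ and pick nonnegative integers $j_1,j_2$ with $j_1+j_2=k-v$, $j_1\le n(\ell-n)/2$, and $j_2\le\ell(m-n)/2$; such a choice exists because the upper bounds sum to $m\ell/2-n^2/2\ge k-v$. The componentwise sum $(n^n,n^n,\tau_v)+((\ell-n)^n,(\ell-n)^n,\tau_{j_1})=(\ell^n,\ell^n,\tau_{v+j_1})$, together with Theorem~\ref{t:manivel}, gives
$$
g(\ell^n,\ell^n,\tau_{v+j_1})\,\ge\,g(n^n,n^n,\tau_v),
$$
provided $g((\ell-n)^n,(\ell-n)^n,\tau_{j_1})=\pq_{j_1}(n,\ell-n)$ (by Lemma~\ref{l:g_partitions}) is positive. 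Conjugation then gives $g(\ell^n,\ell^n,\tau_{v+j_1})=g(n^\ell,n^\ell,\tau_{v+j_1})$, and a parallel second monotonicity step adding $((m-n)^\ell,(m-n)^\ell,\tau_{j_2})$ yields
$$
g(m^\ell,m^\ell,\tau_k)\,\ge\,g(n^\ell,n^\ell,\tau_{v+j_1}),
$$
provided $\pq_{j_2}(\ell,m-n)>0$. Chaining these three inequalities with the base estimate $g(n^n,n^n,\tau_v)\ge C\,2^{\sqrt{2v}}/(2v)^{9/4}$ from Corollary~\ref{c:square_bin} then delivers the theorem.

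Positivity of $\pq_{j_i}$ is automatic at $j_i=0$ (tensor with the trivial representation) and, by the strict unimodality of~\cite{PP_s}, holds for $j_i\in[2,ab/2]$ whenever the rectangle dimensions $a,b\ge 8$. The definition of $n$ forces $\ell-n\ge 8$ and, since $m\ge\ell$, also $m-n\ge 8$, so both strict unimodality statements apply. The principal technical obstacle is the gap $\pq_1=0$, which forces $(j_1,j_2)$ to avoid the value $1$; since $n(\ell-n)/2\ge 32$ once $n\ge 8$, one has ample latitude to perturb, and the parity prescription in the definition of $n$ is there precisely to guarantee that $n^2/2$ and the midpoints $n(\ell-n)/2$, $\ell(m-n)/2$ are integers, so that the entire range $k\in[1,\lfloor m\ell/2\rfloor]$ is covered. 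A boundary value such as $k=v+1$, where no $\pm 1$ perturbation keeps the sum at $1$, can be handled by instead starting from $v'=v-1$ and splitting $j_1+j_2=2$, at the cost of a negligible exponent adjustment. Finally, the small cases $\ell\in[8,15]$ where $n<8$ renders Corollary~\ref{c:square_bin} unavailable are handled by direct verification: the stated lower bound is at most $1$ in this range, so it follows from the strict unimodality $\pq_k(\ell,m)\ge 1$ of~\cite{PP_s}.
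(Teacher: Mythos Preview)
Your approach is essentially the paper's: reduce $g(m^\ell,m^\ell,\tau_k)$ to $g(n^n,n^n,\tau_v)$ via two applications of Manivel's monotonicity (one in each rectangle dimension, with a conjugation in between), use strict unimodality for the positivity hypothesis in Theorem~\ref{t:manivel}, apply Corollary~\ref{c:square_bin} at the square, and treat small~$\ell$ separately. Your explicit acknowledgment of the $j_i=1$ obstruction and the workaround via $v'=v-1$ is in fact more careful than the paper's own argument, which takes $s=\min(k,n\ell/2)$ and $v=\min(s,n^2/2)$ without addressing the case $s-v=1$; note, however, that your fix does not quite recover the stated bound with the exact constant~$C$ at that single value of~$k$, so the ``negligible exponent adjustment'' is a genuine (if tiny) residual gap shared with the paper.
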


\begin{proof}
We apply Theorem~\ref{t:manivel} to bound the Kronecker coefficient for rectangles with an appropriate Kronecker coefficient for a square and then
apply Corollary~\ref{c:square_bin}.

By strict unimodality~\eqref{eq:strict-unim}, we have that $g(m^\ell,m^\ell, (m\ell-k,k) )>0$ for all $\ell,m \geq 8$.
By Corollary~\ref{c:square_bin}, we can assume $\ell <m$. Assume first that $\ell >16$.

First, suppose that $\ell m$ is even and let $n= 2\lfloor \frac{\ell-8}{2} \rfloor $. Then for any $1<k \leq \frac{\ell m}{2}$ we can find $1\neq
r\leq \frac{ (m-n)\ell}{2}$ and $1\neq s \leq \frac{ n\ell}{2} $, such that $k=r+s$. Take $s=\min(k,n\ell/2)$ . Let $\tau_k = (m\ell-k,k)$ and
$\tau_r = \left( (m-n)\ell-r,r \right)$, $\tau_s= \left( n\ell-s,s\right)$. Apply Theorem~\ref{t:manivel} to the triples $\left( (m-n)^\ell,
(m-n)^\ell, \tau_r \right)$ and $\left( n^\ell, n^\ell, \tau_s\right)$ to obtain

$$\pq_k(\ell,m) \, = g( m^\ell, m^\ell, \tau_k) \geq \max\left( g\bigl( (m-n)^\ell, (m-n)^\ell, \tau_r \bigr), g\bigl( n^\ell, n^\ell,
\tau_s\bigr) \right)\, \geq \, \pq_s(\ell,n).$$
Similarly, dividing the $n \times \ell$ rectangle into $n\times n$ square and $n \times (n-\ell)$ rectangle, where $n\ell$ is even, we have
$$\pq_s(\ell,n) \geq \pq_{v}(n,n),$$
where $v=\min(s, n^2/2)=\min(k, n^2/2)$.

In the case that both $\ell$ and $m$ are odd, the only case where the above reasoning fails is when $k=\lfloor m\ell /2 \rfloor$ and $r,s$ don't
exist. In this case we take $n=2 \lfloor \frac{\ell-8}{2}\rfloor -1$ and we can always find $r,s$. In summary, we have that
$$\pq_k(\ell,m) \geq \pq_{v}(n,n),$$
where
$$n = \begin{cases} 2 \lfloor \frac{\ell-8}{2} \rfloor ,& \text{ when $\ell m$ is even }\\
2 \lfloor \frac{\ell-8}{2}\rfloor -1, & \text{ when $\ell m$ is odd }
\end{cases}$$
and
$v = \min(k, n^2/2)$.
Now apply Corollary~\ref{c:square_bin} to bound $\pq_{v}(n,n)$ and obtain the result for $\ell >16$.

When $\ell \leq 16$, and $m \geq 24$, we can apply the same reasoning as above to show $\pq_k(\ell,m) \geq \pq_{v}(\ell,16)$. Finally, for $\ell,m\leq
16$ the statement is easily verified by direct calculation.
\end{proof}

\begin{proof}[Proof of Theorem~\ref{t:qbin-main}]
For $n\geq \ell-9$, the desired inequality then follows from Theorem~\ref{t:qbin-rect} and the observation that
$$\frac{2^{n/\sqrt{2}} }{ n^{9/2} }  \, \geq  \, 2^{-9/\sqrt{2} } \. \frac{2^{\ell/\sqrt{2}} }{ \ell^{9/2} }\..
$$
Taking \ts $A = 2^{-9/\sqrt{2} }  \ts C \approx 0.004$ \ts gives the desired inequality for all values.
\end{proof}

\smallskip

\subsection{Other bounds}  \label{ss:qbin-upper}
Let $k\le \ell\le m$ and $n=\ell\ts m$. We have:
$$
\pq_k(\ell,m) \. = \. p_k(\ell,m) \ts - \ts p_{k-1}(\ell,m) \.
= \. \parti(k) \ts - \ts \. \parti(k-1) \. \geq \. \parti'(k) \.
\sim \. \frac{\pi}{12\ts\sqrt{2}\ts k^{3/2}} \,\, e^{\pi \sqrt{\frac{2}{3}\ts k}}
$$
Compare this with the lower bound in Theorem~\ref{t:qbin-main}:
$$\pq_k(\ell,m) \, > \, A \, \frac{2^{\sqrt{2\ts k}}}{(2\ts k)^{9/4}}\,.
$$
There is only room to improve the base of exponent here:
$$
\text{from} \ \ 2^{\sqrt{2}} \.\approx \. 2.26 \ \   \text{to}
\ \ e^{\pi \sqrt{\frac{2}{3}}} \. \approx \. 13.00\,.
$$
In fact, using our methods, the best lower bound we can hope
to obtain is
$$
e^{\pi \sqrt{\frac{1}{6}}} \. \approx \. 3.61\,,
$$
which is the base of exponent in the Roth--Szekeres formula
for the number $b_k(n)$ of unrestricted partitions into distinct
odd parts, where~$n\ge k$.

\smallskip

For a different extreme, let $m=\ell$ be even, and $k=m^2/2$.
We have the following sharp upper bound:
$$
\pq_k(m,m) \. \le \. p_k(m,m) \. \sim \. \sqrt{\frac{3}{\pi \ts m}}
\ts  \binom{2\ts m}{m} \. \sim \. \frac{\sqrt{3}\, 4^m}{\pi \ts m}\,.
$$
On the other hand, the lower bound in Theorem~\ref{t:qbin-main} gives:
$$\pq_k(m,m) \, > \, A \, \frac{2^{m}}{m^{9/2}}\,.
$$
Again, we cannot improve the base of the exponent~$2$ with our method,
simply because the total number of partitions into distinct odd
parts~$\le 2m-1$, is equal to~$2^m$.

\bigskip

\section{Effective bounds on Kronecker coefficients for more general triples}\label{sec:gen}

Here we prove a lower bound for a wide class of
partition triples. 

Let $\mu$ be a partition with Durfee square of size~$\geq n$. We say that $\mu$ is
\emph{decomposed} as $(n^n+\al,\be)$ if $\al,\be $ are the partitions which
remain after an $n \times n$ square is removed from~$\la$: $\al$ is the remaining partition occupying the first $n$ rows and $\be$ is the partition below, i.e. $\be = (\la_{n+1},\la_{n+2},\ldots)$.

\begin{thm}\label{thm:gen}
Let $\la, \mu, \nu$ be partitions of the same size, such that the Durfee squares of $\mu$ and $\nu$ have sizes at least $n$, and such that the Young diagrams of $\la, \mu, \nu$ can be decomposed as
$\mu = (n^n + \al^1, \al^2 ), \nu = (n^n + \be^1, \be^2)$ and $\la = \gamma^1+\gamma^2 + \tau$, where $\tau = (n^2-k,k)$ or $\tau=(n^2-k,k)'$ for some $k>1$. Suppose that
$g( \al^1, \be^1, \ga^1)>0$ and $g( \al^2, \be^2,   \ga^2)>0$. Then
$$g(\la,\mu,\nu)\,  \geq\,  C  \, \frac{2^{\sqrt{2k}}}{(2k)^{9/4}}\,,
\quad \text{where} \quad C = \frac{\sqrt{27/8}}{\pi^2}\,.
$$
\end{thm}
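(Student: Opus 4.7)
The plan is to reduce $g(\la,\mu,\nu)$ to the ``pure square'' Kronecker coefficient $g(\tau, n^n, n^n) = \pq_k(n,n)$ using Manivel's monotonicity (Theorem~\ref{t:manivel}), and then invoke the effective lower bound from Corollary~\ref{c:square_bin}.

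First I would rewrite the given shape-decompositions as partition-level componentwise sums: $\mu = (n^n) + A$, $\nu = (n^n) + B$, and $\la = \tau + G$, where $A, B, G$ are the partitions whose first $n$ parts encode $\al^1, \be^1, \ga^1$ and whose remaining parts encode $\al^2, \be^2, \ga^2$.  Sizes match up since $|\al^i|=|\be^i|=|\ga^i|$, which is forced by the hypothesis $g(\al^i,\be^i,\ga^i)>0$.

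Second I would prove the auxiliary positivity $g(G, A, B) > 0$ by combining the two given positivities $g(\ga^1, \al^1, \be^1) > 0$ and $g(\ga^2, \al^2, \be^2) > 0$ into a single two-block positivity statement.  This uses a ``stacking'' argument: since $A, B, G$ split into a top block (rows $1$--$n$, carrying $(\ga^1, \al^1, \be^1)$) and a bottom block (rows $n+1$ onward, carrying $(\ga^2, \al^2, \be^2)$), two applications of Theorem~\ref{t:manivel}, one per block, yield $g(G, A, B) \ge 1$.  A third application of Theorem~\ref{t:manivel} with base triple $(\tau, n^n, n^n)$ and addend $(G, A, B)$ then gives
$$
g(\la,\mu,\nu) \, = \, g(\tau + G,\ts (n^n) + A,\ts (n^n) + B) \, \ge \, g(\tau, n^n, n^n)\ts.
$$
Corollary~\ref{c:square_bin} supplies $g(\tau, n^n, n^n) \ge C \cdot 2^{\sqrt{2k}}/(2k)^{9/4}$ with $C = \sqrt{27/8}/\pi^{3/2}$ in the case $\tau = (n^2-k,k)$; the conjugate case $\tau = (n^2-k,k)'$ follows from the symmetry $g(\la',n^n,n^n) = g(\la,n^n,n^n)$, which holds since $(n^n)' = n^n$ is self-conjugate.

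The main obstacle is the two-block positivity $g(G, A, B) > 0$.  The naive componentwise split $A = A^{\mathrm{top}} + A^{\mathrm{bot}}$ with $A^{\mathrm{top}} = (\al^1_1,\ldots,\al^1_n,0,0,\ldots)$ and $A^{\mathrm{bot}} = (0^n, \al^2_1, \al^2_2, \ldots)$ produces a bottom addend with $n$ leading zeros, which is not a partition in the strict sense demanded by Theorem~\ref{t:manivel}.  The technical heart of the argument is therefore to realize the two-block positivity through honest partition-valued addends --- for example by exploiting that Kronecker positivity is insensitive to padding by zeros (or equivalently by passing to the Young subgroup $S_{n^2+|\al^1|}\times S_{|\al^2|}$ and running a restriction/induction argument) --- allowing Manivel's monotonicity to be iterated in a sequence of steps that together recover the block decomposition.
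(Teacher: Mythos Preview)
Your overall strategy --- reduce to $g(\tau,n^n,n^n)$ via Manivel's monotonicity and then invoke Corollary~\ref{c:square_bin} --- matches the paper.  However, your execution has a genuine gap at the very first step, before the obstacle you identify.

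You write $\mu = (n^n) + A$ with $A$ the ``stacking'' $(\al^1_1,\ldots,\al^1_n,\al^2_1,\ldots)$.  But $A$ need not be a partition: nothing forces $\al^1_n \ge \al^2_1$.  For instance, take $n=2$, $\al^1=(1)$, $\al^2=(2)$, so $\mu=(3,2,2)$; then $\mu - (2,2) = (1,0,2)$ is not weakly decreasing, and there is no partition $A$ with $(2,2)+A = (3,2,2)$ componentwise.  So Theorem~\ref{t:manivel} cannot be applied to the triple $((n^n),(n^n),\tau)$ with addend $(A,B,G)$.  A second confusion: $\la = \gamma^1+\gamma^2+\tau$ is a \emph{componentwise} sum, so $G=\gamma^1+\gamma^2$ is not a stacking of $\gamma^1$ over $\gamma^2$; your description of $G$'s first $n$ parts and remaining parts is incorrect.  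Even granting that $A,B$ were partitions, your plan to deduce $g(G,A,B)>0$ by ``one application of monotonicity per block'' would require adding stacked (not componentwise) pieces, which Theorem~\ref{t:manivel} does not provide.

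The paper's device that you are missing is a single transposition.  Because the Young diagram of $\mu$ consists of the shape $n^n+\al^1$ sitting on top of $\al^2$, its conjugate decomposes \emph{componentwise} as
\[
\mu' \,=\, (n^n+\al^1)' \,+\, (\al^2)'\,,
\]
and similarly for $\nu'$.  Using $g(\la,\mu,\nu)=g(\la,\mu',\nu')$, one now has genuine partition addends and can peel off $\bigl(\gamma^2,(\al^2)',(\be^2)'\bigr)$ by monotonicity (positivity follows from $g(\gamma^2,\al^2,\be^2)>0$), transpose back, and then peel off $(\gamma^1,\al^1,\be^1)$ to reach $g(\tau,n^n,n^n)$.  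This converts the vertical ``stacking'' into horizontal componentwise addition and removes the need for any padding or restriction/induction workaround.
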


\begin{proof}
%
Since transposing two partitions in a triple leaves the Kronecker unchanged, by Corollary~\ref{c:square_bin} we have:
\begin{equation}\label{eq:square1}
g(n^n, n^n, \tau) = g(n^n, (n^n)' , (n^2-k,k)') = g(n^n, n^n, (n^2-k,k) )  \, \geq \, C \frac{2^{\sqrt{2k} } }{(2k)^{9/4} }\ts,
\end{equation}
so in particular $g(n^n, n^n, \tau)>0$ with the above effective lower bound. Now we apply the transposition property
and the monotonicity property several times as follows:
$$
\aligned
g( \la, \mu, \nu) \, & = \. g(\la, \mu',\nu') = g\bigl(\tau+\gamma^1+\gamma^2, (n^n+\al^1)' + (\al^2)' , (n^n +\be^1)' +(\be^2)' \bigr)
\\
& \geq \, \max \bigl\{ g \bigl( \tau + \gamma^1, (n^n +\al^1)', (n^n+\be^1)'\bigr), \. g\bigl( \gamma^2, (\al^2)', (\be^2)'\bigr)  \bigr\} \\
& \ge \,
\max  \bigl\{ g( \tau + \gamma^1, n^n +\al^1, n^n+\be^1), g( \gamma^2, \al^2, \be^2)  \bigr\} \\
& \geq \, g ( \tau + \gamma^1, n^n +\al^1, n^n+\be^1)
\\
& \geq \, \max \{ g(\tau, n^n, n^n) , g(\gamma^1,\al^1,\be^1) \} \, \geq \, g(\tau, n^n, n^n)\ts.
\endaligned
$$
The lower bound now follows from equation~\eqref{eq:square1}.
\end{proof}

\begin{proof}[Proof of Corollary~\ref{cor:general}]
Let~$\la$ be decomposed as $\la = (m^m + \al, \be)$, where $\al\vdash a$, $\be \vdash b$.
Since
$$g(\al, \al, (a) ) \. = \. g(\be, \be, (b) )\. = \. 1 \. >0\.,
$$
we can apply Theorem~\ref{thm:gen} for the triple $(\la, \la, \nu)$,
where $\nu = (m^2 -k,k) + (a) + (b) = (n-k,k)$.
\end{proof}

\bigskip

\section{Final remarks} \label{s:fin}


\subsection{} \label{ss:fin-GCT}
It is rather easy to justify the importance of the Kronecker coefficients
in Combinatorics and Representation Theory.  Stanley writes:
``One of the main problems in the combinatorial representation theory
of the symmetric group is to obtain a combinatorial interpretation for
the Kronecker coefficients''~\cite{Sta}.

Geometric Complexity Theory (GCT) is a more recent interdisciplinary
area, where computing the Kronecker coefficients is crucial (see~\cite{MS}).
B\"urgisser voices a common complaint of the experts:
``frustratingly little is known about them''~\cite{Bur}.

Part of this work is motivated by questions in GCT.  Specifically, the interest is
 in estimating the coefficients
$$
g\bigl(m^\ell,m^\ell, \la)\ts, \quad \text{where} \ \ \ \la \vdash \ell\ts m\ts.
$$
Both theorems~\ref{t:char_effective} and~\ref{t:qbin-main}
are directly applicable to this case, when $m=\ell$ and $\la=\la'$,
and when $\ell(\la)=2$, respectively.
We plan to return to this problem in the future.

\subsection{}  \label{ss:fin-border}
The notion of $s$-border equal partitions given in $\S$\ref{ss:border}
is perhaps new but rather natural. It would be interesting to see if
a stronger bound
$$
g(\la,\mu,\mu) \, \ge \,g\bigl(\la^\bos,\mu^\bos,\mu^\bos\bigr)
$$
holds under the assumptions of Corollary~\ref{cor:border}.
Note that when one of the partitions of decomposition is empty,
$\la = (m^m+\al,\varnothing)$, $m = d(\la)$, the bound follows
immediately from Theorem~\ref{thm:gen}.  Note also
that one cannot drop the $\mu=\mu'$ assumption here.  For example, when
$\la=\mu=(2,2,1)$ and $s=1$, we have  $\la^\boss=\mu^\boss=(1)$, and
$g(\la,\mu,\mu)=0$ while $g(\la^\boss,\mu^\boss,\mu^\boss)=1$.

\subsection{}  \label{ss:fin-char}
In a special case $\la=\mu=\mu'$, Theorem~\ref{t:char_effective} and
the Murnaghan--Nakayama rule gives a weak bound $g(\mu,\mu,\mu) \ge 1$
proved earlier in~\cite{BB}.  In~\cite{PPV}, we apply a qualitative
version of Theorem~\ref{t:char_effective} to a variety of partitions generalizing
hooks and two-row partitions.
Unfortunately, computing the characters of $S_n$ is
\SP-hard~\cite{PP_c}.  It is thus unlikely that this
approach can give good bounds for general
\emph{tensor squares} $g(\la,\mu,\mu)$ (cf.~\cite{Val2}).

\subsection{}  \label{ss:fin-asym}
As we showed in $\S$~\ref{ss:qbin-upper}, there is a gap in the base
of the exponent between lower and upper bounds even for $\de_k(m,m)$.
Finding sharper lower bounds in this case would be very interesting.

On the other hand, there is perhaps also room for improvement for
the rectangular case $\de_k(\ell,m)$, where $\ell = o(m)$ and $k$ is
in the ``middle range'' $\ell^2/2 \le k \le \ell \ts m/2$.  Since our
proof of a lower bound in this case relies crucially on
Theorem~\ref{t:manivel} and no other tools in this case are available,
extending the inequality in the theorem would be very useful.

\subsection{}  \label{ss:fin-qbin}
As we mentioned in the introduction, the first lower bound
$\pq_k(\ell,m)\ge 1$ was obtained by the authors in~\cite{PP_s}.
This was quickly extended in followup papers~\cite{Dha} and~\cite{Zan},
both of which employed O'Hara's combinatorial proof~\cite{O}.  First,
Zanello's proof gives:
$$
\pq_k(\ell,m) \. > \. d\., \ \ \ \text{for}
\ \ \ell \. \ge \. d^2+5d+12\ts, \ \, m \. \ge \. 2d+4\ts, \ \,
4d^2 + 10d+7 \le \. k \. \le \ell m/2\ts,
$$
see the proof of Prop.~4 in~\cite{Zan}.  Similarly, Dhand proves
somewhat stronger bounds
$$
\pq_k(\ell,m) \. > \. d\., \ \ \ \text{for} \ \
\ell, \ts m \. \ge \. 8\ts d, \ \ 2 d \le \. k \. \le \ell m/2\ts,
$$
see Theorem~1.1 in~\cite{Dha}.

In conclusion, let us mention that the sequence $p_k(\ell,m)$ has
remarkably sharp asymptotics bounds,
including the \emph{central limit theorem} (CLT)
with the error bound~\cite{Tak}, and
the Hardy--Ramanujan type formula~\cite{AA}.  When
$\ell$ is fixed, sharp asymptotic bounds are given in~\cite{SZ}.

\vskip.3cm

\noindent
{\bf Acknowledgements.} \ We are grateful to Sasha Barvinok,
Jes{\'u}s De Loera, Stephen DeSalvo, Richard Stanley and Fabrizio Zanello
for many helpful conversations.  Special thanks to Ernesto Vallejo for
reading the early draft of the paper and help with the  references.
This work was done while the first author was partially supported by the NSF grants,
and the second by the Simons Postdoctoral Fellowship while a postdoc at UCLA.

\vskip.8cm


%

{\footnotesize

}

\end{document}